\documentclass[11pt,a4paper]{amsart}
\usepackage{iftex}
\ifluatex
    \usepackage{luatexja}
\fi
\ifpdftex
    \usepackage[whole]{bxcjkjatype}
\fi

\usepackage{amsmath,amssymb,amsthm}
\usepackage[alphabetic,nobysame]{amsrefs}
\usepackage{amsaddr}
\usepackage{tikz,tikz-cd}

\usepackage{mathtools}
\usepackage{hyperref}

\theoremstyle{definition}
\newtheorem{defi}{Definition}[section]
\newtheorem{ex}[defi]{Example}
\newtheorem{rem}[defi]{Remark}
\theoremstyle{plain}
\newtheorem{thm}[defi]{Theorem}
\newtheorem{prop}[defi]{Proposition}
\newtheorem{lem}[defi]{Lemma}
\newtheorem{cor}[defi]{Corollary}
\newtheorem{problem}[defi]{Problem}

\newcommand{\spec}{\operatorname{Spec}}
\newcommand{\Reg}{\operatorname{Reg}}
\newcommand{\Ann}{\operatorname{Ann}}
\newcommand{\ass}{\operatorname{Ass}}

\newcommand{\grade}{\operatorname{grade}}
\newcommand{\pgrade}{\operatorname{p\text{-}grade}}
\newcommand{\rad}{\operatorname{rad}}
\newcommand{\mdepth}{\operatorname{depth}}
\newcommand{\idht}{\operatorname{ht}}

\newcommand{\Tor}{\operatorname{Tor}}

\newcommand{\wgldim}{\operatorname{w\text{.}gl\text{.}dim}}

\newcommand{\coker}{\operatorname{Coker}}

\newcommand{\N}{\mathbb{N}}

\newcommand{\Q}{\mathbb{Q}}
\newcommand{\Z}{\mathbb{Z}}
\newcommand{\ideal}[1]{\mathfrak{#1}}

\newcommand{\mkset}[2]{\left\{#1\mathrel{}\middle|\mathrel{}#2\right\}}

\let\bar\overline

\begin{document}

\title[Polynomial Extensions and Localization of CM Rings]{Polynomial Extensions and Localization of Non-Noetherian Cohen--Macaulay Rings}
\author{Ryoya ANDO}

\address{SKILLUP NeXt Ltd., Chiyoda, Japan}
\email{\url{r_ando@skillupai.com}}
\email{\url{6122701@alumni.tus.ac.jp}}
\date{\today}

\keywords{polynomial grade, Cohen--Macaulay rings, torsion-free modules, Krull primes}
\subjclass[2020]{13H10 (Primary)}

\def\labelenumi{(\theenumi)}
\begin{abstract}
    This paper studies polynomial extensions and localization of HMCM rings, where HMCM means Cohen--Macaulay in the sense of Hamilton--Marley, a notion for non-Noetherian rings. We show that the HMCM property is not preserved either under polynomial extensions or under localization in general. More precisely, we construct an HMCM ring $A$ such that $A[X]$ is not HMCM, and an HMCM ring $B$ with a prime ideal $\mathfrak q$ such that $B_{\mathfrak q}$ is not HMCM. We also prove a positive result: polynomial rings over stably coherent rings of finite weak global dimension are HMCM. In addition, we revisit polynomial grade, give a counterexample to the ``Moreover'' assertion in \cite{Hamilton-Marley2007}*{Proposition~2.7}, and study localization of torsion-free modules via regular saturation and Krull primes.
\end{abstract}
\maketitle

\section{Introduction}\label{sec:intro}

Throughout this paper, all rings are commutative with the identity element.

Homological algebra over non-Noetherian rings has become increasingly important in commutative algebra, especially through its applications to perfectoid ring theory. However, the behavior of regular sequences over non-Noetherian rings is more delicate than in the Noetherian case. In Noetherian ring theory, the set of zero divisors is well controlled by finitely many prime ideals, namely by associated primes. Once the Noetherian or finitely generated hypothesis is removed, this argument breaks down. For example, it may happen that $(0:_M I)=0$ but that $I$ contains no $M$-regular element. Thus, in order to study notions such as the Cohen--Macaulay property and regularity over non-Noetherian rings, it is not sufficient to use the classical notions of grade and regular sequences without modification. In Section~\ref{sec:background}, we first review generalizations of regular sequences and definitions of non-Noetherian Cohen--Macaulay rings.


In Section~\ref{sec:pgrade}, we discuss polynomial grade. This notion was introduced by Hochster \cite{Hochster1974} and plays an important role in the definition, due to Hamilton--Marley, of Cohen--Macaulay rings over non-Noetherian rings, which we call HMCM rings in this paper. Roughly speaking, polynomial grade provides a more stable invariant than the classical grade by measuring grade after adjoining finitely many polynomial variables. In Theorem~\ref{thm:pgrade-ff-sup}, we prove that, for a finitely generated ideal $I$ and an $A$-module $M$, the polynomial grade can be written as $\pgrade_I M=\sup_B \grade_{IB}(M\otimes_A B)$, where the supremum on the right-hand side is taken over all faithfully flat $A$-algebras $B$. This equality was stated without proof in \cite{Hamilton-Marley2007}. We also show, by means of a counterexample, that the ``Moreover'' assertion in \cite{Hamilton-Marley2007}*{Proposition~2.7} is not valid in general; see Remark~\ref{remark:hm-counterexample}.


Polynomial extensions and localization are two basic operations in commutative algebra, especially in the study of homological properties. In the Noetherian setting, the Cohen--Macaulay property behaves well with respect to both operations: a ring $A$ is Cohen--Macaulay if and only if $A[X]$ is Cohen--Macaulay, and the Cohen--Macaulay property is local. For HMCM rings, however, the corresponding permanence properties are more subtle. It is known that $A[X]$ being HMCM implies that $A$ is HMCM (\cite{Hamilton-Marley2007}*{Corollary~4.6}). It is also known that if $A_{\mathfrak{m}}$ is HMCM for every maximal ideal $\mathfrak{m}$, then $A$ is HMCM (\cite{Hamilton-Marley2007}*{Proposition~4.7}). In this paper, we show that the converses suggested by the Noetherian theory fail in general: HMCM rings are not stable under polynomial extensions, and the HMCM property is not local in general.

In Section~\ref{sec:poly-HMCM}, we study the behavior of HMCM rings under polynomial extensions. We construct an example of an HMCM ring $A$ such that $A[X]$ is not HMCM; see Corollary~\ref{cor:A is HMCM but A[X] is not so}. This shows that the HMCM property is not preserved under polynomial extensions in general. We also prove a positive result: if $A$ is stably coherent and has finite weak global dimension, then $A[X_1,\dots,X_n]$ is locally HMCM, and hence HMCM, for every $n\geq 0$.

In Section~\ref{sec:hmcm-localization}, we turn to localization of HMCM rings. We construct an HMCM ring $B$ and a prime ideal $\mathfrak q$ such that $B_{\mathfrak q}$ is not HMCM. Thus, the HMCM property is not local in general. The same construction also yields a torsion-free module whose localization is not torsion-free, suggesting that the appearance of new regular elements after localization is a basic obstruction.

Motivated by this observation, in Section~\ref{sec:loctf} we study localization of torsion-free modules independently of the HMCM property. We introduce the property $\mathrm{LocTF}$ and characterize it in terms of regular saturation and Krull primes. These results isolate the obstruction appearing in Example~\ref{ex:hmcm-ring-localizes-to-bad-idealization} and may be useful in studying the localization problem under additional finiteness assumptions, such as coherence.

Thus, in general, the HMCM property is preserved by neither polynomial extensions nor localization.

\section{Generalizations of regular sequences and non-Noetherian Cohen--Macaulay rings}\label{sec:background}

As mentioned in the Introduction, homological algebra over non-Noetherian rings has become increasingly important in commutative algebra, especially through its applications to perfectoid ring theory. One of the most striking achievements in this direction is the solution of the big Cohen--Macaulay conjecture.

\begin{thm}[Big Cohen--Macaulay conjecture, \cites{Andre2018-1,Andre2018-2}]\label{thm:bigCM}
    Let $(A, \ideal{m})$ be a Noetherian local ring.
    Then there exists an $A$-algebra $B$ such that $B \neq \ideal{m} B$ and a system of parameters of $A$ forms a regular sequence on $B$.
    Such a $B$ is called a big Cohen--Macaulay algebra over $A$.
\end{thm}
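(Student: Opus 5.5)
This statement is not proved in the paper; it is quoted from \cites{Andre2018-1,Andre2018-2} as background. The following is therefore only an outline of the route I would expect a proof to take. At the step it relies on most, it borrows André's results rather than re-deriving them.

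\textbf{Reductions.} First I would reduce to the case where $A$ is a complete local domain. Passing to $\widehat{A}$ is harmless, since a system of parameters of $A$ remains one in $\widehat{A}$. Factoring out a minimal prime $\mathfrak p$ with $\dim \widehat A/\mathfrak p=\dim \widehat A$ also preserves the property of being a system of parameters. Any big Cohen--Macaulay algebra over such a quotient is then also a big Cohen--Macaulay algebra over $A$. By the Cohen structure theorem, $A$ is then module-finite over a regular complete local ring $R=\Lambda[[x_2,\dots,x_d]]$, with $\Lambda$ a field or a complete DVR. One then splits into cases by characteristic.

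\textbf{Equal characteristic.} In characteristic $p>0$, I would take $B=A^+$, the integral closure of $A$ in an algebraic closure of its fraction field. By Hochster--Huneke, $A^+$ is big Cohen--Macaulay; the key input is that Frobenius makes colon-capturing relations almost vanish, and these can then be killed in $A^+$. In equal characteristic $0$, I would use Hochster's reduction to characteristic $p$. A failure of the required regularity would be witnessed by a finite system of polynomial equations. Such a system has a solution over some finitely generated $\Z$-algebra, and hence over fields of characteristic $p\gg0$, which contradicts the characteristic $p$ case.

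\textbf{Mixed characteristic (main obstacle).} Here $\Lambda=W(k)$ and $p$ is part of the system of parameters. The plan follows André:
\begin{enumerate}
\item Adjoin compatible $p$-power roots of $p$ and of the $x_i$, and complete $p$-adically. This gives a perfectoid algebra $R_\infty$ over $R$, which is Cohen--Macaulay in the almost sense because $R$ is regular.
\item Use André's perfectoid Abhyankar lemma to extend this to a perfectoid $A$-algebra $A_\infty$. The ramification of $A$ over $R$ is first made almost étale after adjoining $p$-power roots of a discriminant $g$. The resulting algebra is almost Cohen--Macaulay with respect to $(pg)^{1/p^\infty}$.
\item Remove the ``almost'' using Hochster's partial algebra modifications. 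André's flatness lemma supplies faithfully flat perfectoid extensions in which the necessary elements acquire all $p$-power roots. This ensures that no modification sequence collapses to give $\ideal m B=B$.
\end{enumerate}
The Abhyankar-lemma step is the hard core of the argument, and I would take it from \cite{Andre2018-1} as a black box.

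Alternatively, in mixed characteristic one could invoke Bhatt's theorem that the $p$-adic completion of $A^+$ is Cohen--Macaulay. This gives $B=\widehat{A^+}$ directly.
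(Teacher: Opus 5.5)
Your outline agrees with the paper's treatment: the paper gives no proof of this theorem and simply cites André, remarking only that equal characteristic was long known and that mixed characteristic was settled by perfectoid methods, so splitting by characteristic and taking André's perfectoid Abhyankar lemma and the removal of ``almost'' via partial algebra modifications as a black box is exactly the paper's stance. The one loose step is equal characteristic zero: what one descends there is a bad finite sequence of Hochster's algebra modifications, not a failure of regularity on a fixed algebra (since $A^+$ is in general not big Cohen--Macaulay in characteristic $0$), and getting from the complete local ring to a finitely generated $\Z$-algebra requires Artin approximation before reducing modulo $p$.
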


The big Cohen--Macaulay conjecture had long been known in equal characteristic, and the mixed characteristic case was solved using perfectoid algebras. In the equal characteristic theory, the Frobenius map plays a central role. The following theorem of Kunz is a basic example of the role played by the Frobenius map; a mixed characteristic analogue was proved by Bhatt--Iyengar--Ma \cite{Bhatt-Iyengar-Ma2019}.




\begin{thm}[\cite{Kunz1976}]\label{thm:Kunz}
    Let $A$ be a Noetherian ring of characteristic $p>0$. Then the following conditions are equivalent.
    \begin{enumerate}
        \item $A$ is regular.
        \item The Frobenius map $F\colon A \to A$, $a \mapsto a^p$, is flat.
    \end{enumerate}
\end{thm}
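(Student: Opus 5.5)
The plan is to prove Kunz's theorem through its two implications, reducing each to the case of a Noetherian local ring $(A,\ideal{m},k)$. Both regularity and flatness of $F$ are local conditions: the Frobenius map commutes with localization, since $(A_{\ideal p})^{p} \cong (A^{p})_{\ideal p}$ in the sense that $F_{A_{\ideal p}}$ is the localization of $F_A$ at $\ideal p$. Hence it suffices to show that $F$ is flat if and only if $A$ is a regular local ring.

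For (1) $\Rightarrow$ (2), first note that $F$ is injective because a regular local ring is a domain, hence reduced. I would then use the criterion that a local homomorphism $A\to B$ of Noetherian local rings, with $A$ regular, is flat if and only if $\dim B=\dim A+\dim B/\ideal{m}B$ and $B$ is Cohen--Macaulay. Here $B=A$ via $F$, so $\dim B=\dim A$ and $\ideal{m}B=\ideal{m}^{[p]}$ is $\ideal m$-primary, giving $\dim B/\ideal mB=0$. Since $B$ is regular, it is Cohen--Macaulay, so flatness follows. Alternatively, and more concretely, I would pass to the completion $k[[x_1,\dots,x_d]]$ and check flatness by hand. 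Over a coefficient field this module is free with basis the monomials $x^{\alpha}$, $0\le\alpha_i<p$, over the subring of $p$-th powers, with the caveat that one must also handle $k$ versus $k^p$. It is cleaner to use the local flatness criterion: $\Tor_1^A(k,{}^FA)=0$ because the regular system of parameters $x_1,\dots,x_d$ maps to $x_1^p,\dots,x_d^p$, which is still a regular sequence. Koszul homology then computes $\Tor$ and vanishes in positive degrees.

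For (2) $\Rightarrow$ (1), assume $F$ is flat, hence so is $F^e$ for every $e$. Flat local maps preserve injectivity of maps of modules, so $A$ is reduced: a nilpotent $a$ with $a^{p^e}=0$ would lie in the kernel, and flatness plus faithful flatness forces $F$ injective. For lengths, base change along the flat map $F^e$ sends $A/\ideal m$ to $A/\ideal m^{[p^e]}$. Flatness gives $\ell(A/\ideal m^{[p^e]})=\ell(A/\ideal m)\cdot \ell({}^{F^e}A/\ideal m\,{}^{F^e}A)$, and computing more carefully via a filtration of a length-$\ell$ module gives
\[
\ell_A(A/\ideal m^{[p^e]})=\ell_A(A/\ideal m)\cdot p^{e\dim A}\cdot(\text{correction}),
\]
so that $\ell(A/\ideal m^{[p^e]})=p^{ed}$ with $d=\dim A$. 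The cleanest route is to apply flatness to the Koszul complex, or to $\Tor_i^A(k,-)$. Since $F^e$ is flat, $\Tor_i^A(k,k)\otimes {}^{F^e}A\cong\Tor_i^A(A/\ideal m^{[p^e]},\ldots)$. I would show that for large $e$ we have $\ideal m^{[p^e]}\subseteq \ideal m^{2}$-type containment, which forces the maps in a minimal free resolution of $k$, after Frobenius base change, to have entries in $\ideal m^{[p^e]}$. Flatness then makes the base-changed complex still a resolution of $A/\ideal m^{[p^e]}$ that is minimal. Comparing with the minimal resolution of $k$ gives $\beta_i(k)=\beta_i(A/\ideal m^{[p^e]})$. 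Taking $\ideal m$ generated by $n$ elements, $\ideal m^{[p^e]}$ has finite projective dimension only if the resolution is finite, and the key trick is to use the Koszul complex on generators of $\ideal m$: its homology $H_1$ base-changes to that for the $p^e$-th powers. Since $\ideal m^{[p^e]}\subseteq \ideal m^{n(p-1)+1}$-style containments kill these classes, $H_1=0$, so $\ideal m$ is generated by a regular sequence and $A$ is regular.

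The main obstacle is this last step, showing that flat Frobenius kills the first Koszul homology. I would follow Kunz's original length argument, or the simplified Koszul/Tor argument: flatness gives $H_i(x^{p^e};A)\cong H_i(x;A)\otimes{}^{F^e}A$, and the left side is annihilated by a high power of $\ideal m$ that pulls back to force vanishing via Nakayama.
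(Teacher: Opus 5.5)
The paper gives no proof of this statement; it quotes Kunz's theorem as background, so your proposal has to stand on its own. Your reduction to the local case and your argument for (1)$\Rightarrow$(2) are sound. Either route works: miracle flatness with $\dim A/\mathfrak m^{[p]}=0$, or the local flatness criterion with $\Tor_1^A(k,{}^FA)=H_1(x_1^p,\dots,x_d^p;A)=0$. The latter needs no F-finiteness, since ${}^FA$ is finite over the target copy of $A$.

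\textbf{The gap is in (2)$\Rightarrow$(1):} none of the strands you sketch actually reaches regularity.
\begin{itemize}
\item The displayed length identity is a tautology, because ${}^{F^e}A/\mathfrak m\,{}^{F^e}A$ is just $A/\mathfrak m^{[q]}$ with $q=p^e$. The ``correction'' factor is never defined.
\item The equality $\beta_i(A/\mathfrak m^{[q]})=\beta_i(k)$ is correct but contradicts nothing. The remark that $\mathfrak m^{[p^e]}$ has finite projective dimension only if the resolution is finite is circular.
\item The Koszul finish fails for a concrete reason. Flatness gives $H_1(x_1^q,\dots,x_n^q;A)\cong H_1(x_1,\dots,x_n;A)\otimes_A{}^{F^e}A\cong(A/\mathfrak m^{[q]})^{b_1}$, where $b_1=\dim_k H_1(x_1,\dots,x_n;A)$. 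This module is annihilated by $\mathfrak m^{[q]}=(x_1^q,\dots,x_n^q)$, which is exactly the annihilator Koszul homology on $x_1^q,\dots,x_n^q$ always has. So no annihilator-plus-Nakayama argument can force $b_1=0$.
\item The containment you invoke is reversed: in fact $\mathfrak m^{n(q-1)+1}\subseteq\mathfrak m^{[q]}\subseteq\mathfrak m^{q}$, and neither inclusion kills Koszul classes.
\end{itemize}

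What is missing is a genuinely new input. The correct length consequence of flatness is $\ell(A/I^{[q]})=\ell(A/I)\,\ell(A/\mathfrak m^{[q]})$ for every $\mathfrak m$-primary ideal $I$. This comes from exactness of $-\otimes_A{}^{F^e}A$ applied to a composition series of $A/I$. Taking $I=\mathfrak m^t$ with $t\to\infty$, and $I$ a parameter ideal $(y_1,\dots,y_d)$, yields $\ell(A/\mathfrak m^{[q]})=q^{\dim A}$. One inequality uses $(\mathfrak m^t)^{[q]}\subseteq\mathfrak m^{tq}$; the other uses $\ell(A/(y_1^q,\dots,y_d^q))\le q^d\,\ell(A/(y_1,\dots,y_d))$.

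Even so, you would still have to prove that this equality forces regularity. That is the substantive part of Kunz's length characterization, and your sketch does not touch it. Alternatively, you can invoke Herzog's theorem: if $\Tor_i^A(M,{}^{F^e}A)=0$ for all $i>0$ and all $e\ge1$, then $\operatorname{pd}_A M<\infty$. Apply it to $M=k$ and conclude by the Auslander--Buchsbaum--Serre theorem. Either way, this step must actually be supplied rather than gestured at.
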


\begin{thm}[\cite{Bhatt-Iyengar-Ma2019}*{Theorem~4.7}]\label{thm:BIM-kunz}
    Let $A$ be a Noetherian ring such that $p \in \rad(A)$. Then the following conditions are equivalent.
    \begin{enumerate}
        \item $A$ is regular.
        \item There exists a faithfully flat perfectoid $A$-algebra $B$.
    \end{enumerate}
\end{thm}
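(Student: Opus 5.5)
The proof splits into the two directions. For (2)$\Rightarrow$(1) I would reduce to the local case and descend regularity along the faithfully flat map. For (1)$\Rightarrow$(2) I would construct a perfectoid cover, first in the complete regular local case via Cohen's structure theorem, then globally. The hypothesis $p \in \rad(A)$ is what lets $p$-adic completion and perfectoid objects be compared faithfully with $A$ itself.

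\textbf{(2)$\Rightarrow$(1).} Regularity is local and faithful flatness is preserved by localization, so we may assume $A$ is local with maximal ideal $\ideal{m}$, and then pass to $\widehat{A}$. We must show $\operatorname{Tor}^A_i(k, k)=0$ for $i\gg 0$, where $k=A/\ideal{m}$. By flatness of $A\to B$,
\[
\operatorname{Tor}^A_i(k,k)\otimes_A B\cong \operatorname{Tor}^B_i(B/\ideal{m}B, B/\ideal{m}B),
\]
and faithful flatness reflects vanishing. So it suffices to show that over a perfectoid ring $B$ the ideal $\ideal{m}B$ behaves as though it had finite flat dimension. The key input is the perfectoid analogue of Kunz's theorem: for perfectoid $B$, the Frobenius on $B/pB$ is surjective, and $B/pB$ is a direct limit of rings along which $p$-power roots of generators of $\ideal{m}$ exist. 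Following Bhatt--Iyengar--Ma, I would show that for $f \in \ideal{m}$ the modules $B/(f^{1/p^\infty})$ have flat dimension at most one, using the fact that $(f^{1/p^n})$ is generated by a compatible system of roots. Filtering $k\otimes_A B$ by such quotients then gives a uniform bound on $\operatorname{fd}_B(B/\ideal{m}B)$ of at most $\dim A$. This forces $\operatorname{Tor}^A_i(k,k)=0$ for $i>\dim A+1$, so $A$ is regular by Serre's theorem.

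\textbf{(1)$\Rightarrow$(2).} For complete regular local $A$, Cohen's theorem writes $A$ as $W(k)[[x_1,\dots,x_d]]$, or as a quotient of $W(k)[[x_1,\dots,x_d]]$ by an element $p-f$ with $f\notin\ideal{m}^2$, which is again a power series ring over a ramified DVR. Set $B$ to be the $p$-adic completion of
\[
W(k^{\mathrm{perf}})[p^{1/p^\infty}][[x_i^{1/p^\infty}]],
\]
handling the ramified case via André's flatness lemma. Each finite stage is free over $A$, so $B$ is faithfully flat, and one checks directly that $B$ is perfectoid. To globalize, take $\prod_{\ideal{m}}\widehat{A_\ideal{m}}$-type products of perfectoid covers of local completions. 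Faithful flatness of $A\to\prod\widehat{A_{\ideal m}}$ needs $A$ Noetherian, which is given; the hypothesis $p\in\rad(A)$ ensures every maximal ideal contains $p$, so each local cover is $p$-adically meaningful.

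\textbf{Main obstacle.} The main obstacle is the flat-dimension bound over perfectoid $B$ in (2)$\Rightarrow$(1). I would first try to prove it by tilting to characteristic $p$ and invoking Kunz (Theorem~\ref{thm:Kunz}) for perfect rings, where Frobenius is bijective, and then lifting through $\theta$.
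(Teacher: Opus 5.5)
The paper gives no proof of this statement; it is quoted from Bhatt--Iyengar--Ma as background, so your proposal has to stand on its own. Your toolkit is the right one: Serre's criterion via $\Tor^A_i(k,k)$, flat base change, and colimits of Koszul complexes on $p$-power roots. However, the direction (2)$\Rightarrow$(1) has a genuine gap.

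\textbf{The gap in (2)$\Rightarrow$(1).} There are three problems.
\begin{enumerate}
\item Your reductions destroy the hypothesis you use afterwards. $B_{\ideal m}$ and $B\otimes_A\widehat{A}$ are in general not $p$-adically complete, hence not perfectoid. You should not localize $B$ at all. Fix a maximal ideal $\ideal m$ with residue field $k$ and keep the global flat map $A\to B$; this suffices since $\Tor^{A_{\ideal m}}_i(k,k)=\Tor^A_i(k,k)$.
\item An element $f\in\ideal m$ has in general no compatible system of $p$-power roots in $B$. Perfectoidness only gives $p$-th roots modulo $p$, so the modules $B/(f^{1/p^\infty})$ are not available.
\item This one is decisive. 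There is no filtration of $B/\ideal mB$ by such quotients. Moreover, a bound on $\operatorname{fd}_B(B/\ideal mB)$ is, by flat base change and faithful flatness, \emph{equivalent} to the regularity of $A_{\ideal m}$. So it cannot serve as an intermediate step.
\end{enumerate}
Likewise, Kunz's theorem says nothing about the non-Noetherian perfect ring $B^\flat$.

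\textbf{The missing idea.} Replace $\ideal mB$ by its radical $J=\sqrt{\ideal mB}$.
\begin{enumerate}
\item Since $p\in\ideal m$ (this is where $p\in\rad(A)$ is used), $J\supseteq\sqrt{pB}$. The ring $\bar B=B/\sqrt{pB}$ is a perfect $\mathbb{F}_p$-algebra: it is reduced, and Frobenius is surjective because it is so on $B/pB$.
\item One shows $\operatorname{fd}_B\bar B\le 1$. Pick $\varpi\in B$ with $\varpi^p\in pB^\times$ admitting compatible $p$-power roots, so that $\sqrt{pB}=(\varpi^{1/p^\infty})$. Then check that $\varinjlim_n K(\varpi^{1/p^n};B)$ is acyclic in positive degrees; this uses that perfectoid rings are reduced.
\item In the perfect ring $\bar B$, the images of generators $x_1,\dots,x_n$ of $\ideal m$ \emph{do} have compatible $p$-power roots, and $J\bar B=(x_1^{1/p^\infty},\dots,x_n^{1/p^\infty})$. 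The same Koszul argument gives $\operatorname{fd}_{\bar B}(B/J)\le n$, hence $\operatorname{fd}_B(B/J)\le n+1$.
\item $B/J\neq 0$ by faithful flatness, and $\ideal m$ kills it, so $B/J\cong k^{(\Lambda)}$ as $A$-modules with $\Lambda\neq\emptyset$. Flat base change then gives $\bigoplus_\Lambda\Tor^A_i(k,k)\cong\Tor^A_i(k,B/J)\cong\Tor^B_i(B/\ideal mB,B/J)=0$ for $i>n+1$. Serre's theorem finishes.
\end{enumerate}

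\textbf{On (1)$\Rightarrow$(2).} Your globalization by products is fine: products of flat modules over a Noetherian ring are flat, and products of perfectoid rings are perfectoid. The local step, however, is misstated. A ramified complete regular local ring need not be a power series ring over a DVR; for example, in $\Z_p[[x,y]]/(p-xy)$ the element $p$ has two non-associate prime factors.

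Instead, write $\widehat{A_{\ideal m}}=S/(p-f)$ with $S=C[[x_1,\dots,x_d]]$, where $C$ is a Cohen ring of $k$ and $f\in(x_1,\dots,x_d)$. Let $S_\infty$ be the completion of $\bigcup_n W(k^{\mathrm{perf}})[[x_1^{1/p^n},\dots,x_d^{1/p^n}]]$, and base change the faithfully flat map $S\to S_\infty$. Since $p-f$ is a distinguished element of $S_\infty=W(S_\infty/p)$, the ring $S_\infty/(p-f)$ is perfectoid and faithfully flat over $\widehat{A_{\ideal m}}$.
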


Non-Noetherian algebras are used in an essential way in these results. In particular, when $\dim A \geq 1$, the absolute integral closure $A^+$ is not Noetherian. It is therefore desirable to develop a theory of homological algebra that can also be applied to such rings.


In Noetherian ring theory, Cohen--Macaulay rings have been studied extensively as a particularly well-behaved class of rings. Recall that a Noetherian local ring $A$ is called Cohen--Macaulay, or CM for short, if it satisfies the following equivalent conditions.

\begin{enumerate}
  \item $\dim A = \mdepth A$. Here $\mdepth A$ is equal to the maximal length of a regular sequence contained in the maximal ideal of $A$.
  \item Every system of parameters is a regular sequence.
  \item For every proper ideal $I$, one has $\idht I = \grade I$.
\end{enumerate}

However, a naive extension of these notions to non-Noetherian rings does not work well. Indeed, every valuation ring $V$ that is not a field satisfies $\mdepth V=1$, whereas valuation rings of dimension at least $2$ do exist and are necessarily non-Noetherian.


On the other hand, note that every valuation ring is a coherent regular ring. Here coherent means that every finitely generated ideal is finitely presented.

\begin{defi}[\cite{Bertin1971}]
   A ring is called regular if every finitely generated ideal has finite projective dimension.
\end{defi}

This notion is a generalization of classical, that is, Noetherian, regular rings, and Noetherian regular rings are Cohen--Macaulay. Thus it is natural to regard valuation rings, which are not necessarily Noetherian, as Cohen--Macaulay as well. Hence it is not natural to generalize Cohen--Macaulayness to non-Noetherian rings on the basis of the condition $\dim A=\mdepth A$. We are therefore led to seek a generalization to non-Noetherian rings satisfying the following conditions.

\begin{enumerate}
  \item It agrees with the classical definition in the Noetherian case.
  \item Coherent regular rings are CM.
  \item $A$ is CM if and only if $A[X]$ is CM.
  \item $A$ is CM if and only if $A_P$ is CM for every $P\in\spec A$.
\end{enumerate}



Among the conditions above, (3) concerns the behavior of regular sequences and grade under polynomial extensions, while (4) concerns the behavior of regularity under localization. We now review several kinds of sequences that arise naturally as generalizations of regular sequences, especially those that frequently appear in the context of extending Cohen--Macaulayness to non-Noetherian rings.


In the big Cohen--Macaulay conjecture (Theorem~\ref{thm:bigCM}) and in the result of \cite{Bhatt-Iyengar-Ma2019} (Theorem~\ref{thm:BIM4.13}), the focus is also on the behavior of systems of parameters, in particular on whether they are regular. From this point of view, the definition proposed by Hamilton--Marley \cite{Hamilton-Marley2007} is particularly relevant. It is based on a generalization of systems of parameters using weakly proregular sequences. This definition satisfies conditions (1) and (2), as well as the implications that $A[X]$ being HMCM implies that $A$ is HMCM, and that $A$ is HMCM whenever $A_P$ is HMCM for every $P\in\spec A$. The converse implications had remained open; Sections 4 and 5 show that both fail in general.


\begin{defi}[\cite{Schenzel2003}]
  Let $A$ be a ring and let $\underline{a}=a_1,\dots,a_r\in A$.
  We say that $\underline{a}$ is a \textbf{weakly proregular sequence} if, for every $1\leq i\leq r$ and every $n\geq 0$, there exists $m\geq n$ such that the map $\varphi_{mn}\colon H_i(\underline{a}^m)\to H_i(\underline{a}^n)$ is the zero map.
\end{defi}

Greenlees--May \cite{Greenlees-May1992} defined proregular sequences, and Schenzel subsequently introduced the notion of weakly proregular sequences. Every regular sequence is proregular, and every proregular sequence is weakly proregular \cite{Schenzel2003}*{Lemma~2.7}.


The following theorem shows that, in the non-Noetherian setting, weakly proregular sequences can be regarded as sequences for which local cohomology is computed by the \v{C}ech complex.

\begin{thm}[\cite{Schenzel2003}*{Theorem~3.2}]\label{thm:Schenzel}
  Let $A$ be a ring, $\underline{a}=a_1,\dots,a_r\in A$ and $I=(a_1,\dots,a_r)$. $\underline{a}$ is a weakly proregular sequence if and only if for any $i$ and $A$-module $M$, $H^i_I(M)\cong\check{H}^i(\underline{a},M)$ functorially on $M$.
\end{thm}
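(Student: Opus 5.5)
We prove Theorem~\ref{thm:Schenzel} by comparing two $\delta$-functors on the category of $A$-modules. Write $K^\bullet(\underline{a}^n;M)=K^\bullet(\underline{a}^n)\otimes_A M$ for the Koszul cochain complex. The \v{C}ech complex is the direct limit of these, so $\check{H}^i(\underline{a},M)\cong\varinjlim_n H^i(\underline{a}^n;M)$. On the other hand $H^i_I(M)=\varinjlim_n \operatorname{Ext}^i_A(A/I^n,M)$, and since the ideals $(\underline{a}^n)$ and $I^n$ are cofinal, $H^i_I(M)=\varinjlim_n\operatorname{Ext}^i_A(A/(\underline{a}^n),M)$. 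Both $M\mapsto\check{H}^i(\underline{a},M)$ and $M\mapsto H^i_I(M)$ are $\delta$-functors, the first because the \v{C}ech complex consists of flat modules, so it is exact in $M$. In degree $0$ both equal $\Gamma_I(M)$. There is also a natural comparison map: $\Gamma_I(-)$ applied to an injective resolution of $M$ maps to the \v{C}ech double complex, which gives a natural transformation $H^i_I\to\check{H}^i(\underline{a},-)$ extending the identity in degree $0$. Hence $H^i_I\cong\check{H}^i(\underline{a},-)$ as $\delta$-functors if and only if $\check{H}^i(\underline{a},-)$ is effaceable, i.e.\ if and only if $\check{H}^i(\underline{a},E)=0$ for all $i>0$ and every injective $A$-module $E$. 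The theorem therefore reduces to showing that this vanishing on injectives is equivalent to weak proregularity.

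For the vanishing, use Koszul duality. For an injective $E$ we have
\[
H^i(\underline{a}^n;E)\cong\operatorname{Hom}_A(H_{r-i}(\underline{a}^n),E),
\]
and the transition maps in the direct system are the duals of $\varphi_{mn}$. So $\check{H}^i(\underline{a},E)=\varinjlim_n\operatorname{Hom}_A(H_{r-i}(\underline{a}^n),E)$. Concretely, in the contravariant notation of the theorem, vanishing on injectives for $i\geq1$ amounts to the following: for all $j=r-i\in\{0,\dots,r-1\}$, all $n$ and all $f\in\operatorname{Hom}(H_j(\underline{a}^n),E)$, some $m\geq n$ has $f\circ\varphi_{mn}=0$. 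The range matches the definition, since $H_j(\underline{a}^n)$ is nonzero only for $0\le j\le r$ and $j=r$ corresponds to $i=0$. Degree $0$ is harmless: $\varphi_{mn}$ on $H_0=A/(\underline{a}^m)\to A/(\underline{a}^n)$ is surjective, so it is excluded by the definition, which uses $i\geq1$.

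The direction "weakly proregular $\Rightarrow$ vanishing" is immediate, because $\varphi_{mn}=0$ kills every $f$. The converse is the main obstacle. Here I would take $E$ to be an injective module containing $H_j(\underline{a}^n)$, say its injective hull, and let $f$ be the inclusion. Vanishing then gives some $m$ with $f\circ\varphi_{mn}=0$, hence $\varphi_{mn}=0$. The subtle point is that a single $m$ is needed for each fixed $n$, and this is exactly what taking one element $f$ of the direct limit provides. Everything above is natural in $M$, so the resulting isomorphism is functorial. The remaining routine checks are the identification of the Koszul transition maps with the dual of $\varphi_{mn}$ and the compatibility of the comparison map with connecting homomorphisms.
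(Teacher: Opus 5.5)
Your overall route is sound: compare $\delta$-functors, reduce to vanishing of $\check{H}^i(\underline{a},E)$ for injective $E$, then dualize Koszul homology into $E$. However, the key duality formula is mis-indexed, and as written the match with weak proregularity fails. The cochain complex whose limit is the \v{C}ech complex is the $A$-dual of the chain complex, $K^i(\underline{a}^n)=\operatorname{Hom}_A(K_i(\underline{a}^n),A)$. Hence $K^\bullet(\underline{a}^n)\otimes_A E\cong\operatorname{Hom}_A(K_\bullet(\underline{a}^n),E)$, and since $E$ is injective,
\[
H^i(\underline{a}^n;E)\cong\operatorname{Hom}_A\bigl(H_i(\underline{a}^n),E\bigr),
\]
with the \emph{same} index. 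The self-duality $K^i\cong K_{r-i}$ instead gives $H^i(\underline{a}^n;E)\cong H_{r-i}(\underline{a}^n;E)$, which is Koszul homology \emph{with coefficients in} $E$. That is not $\operatorname{Hom}_A(H_{r-i}(\underline{a}^n),E)$; you have merged the two dualities. Already for $A=\Z$, $a=p$, $E=\Q/\Z$ one has $H^1(p^n;E)=E/p^nE=0$, while $\operatorname{Hom}(H_0(p^n),E)=\operatorname{Hom}(\Z/p^n\Z,E)\neq0$.

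This is not cosmetic. With $j=r-i$, vanishing for $i\geq1$ becomes a condition on $H_0,\dots,H_{r-1}$ instead of $H_1,\dots,H_r$.
\begin{itemize}
\item The $H_0$-condition never holds when $\underline{a}A\neq A$. Take a maximal ideal $\mathfrak m\supseteq I$, let $E$ be the injective hull of $A/\mathfrak m$, and let $0\neq f\colon A/(\underline{a}^n)\to A/\mathfrak m\hookrightarrow E$. Since $\varphi_{mn}$ is surjective on $H_0$, $f\circ\varphi_{mn}\neq0$ for all $m$. So your argument would show that $\check{H}^r(\underline{a},-)$ never vanishes on injectives, which is false already for Noetherian $A$.
\item The essential $H_r$-condition is never tested. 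For $r=1$ it is exactly $a^{m-n}\Ann_A(a^m)=0$.
\end{itemize}
Your remark that degree $0$ is ``excluded by the definition, which uses $i\geq1$'' conflates the \v{C}ech degree with the Koszul homological degree. Under your own identification, $j=0$ corresponds to \v{C}ech degree $i=r\geq1$, which is not excluded.

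Once the formula is corrected, $\check{H}^i(\underline{a},E)\cong\varinjlim_n\operatorname{Hom}_A(H_i(\underline{a}^n),E)$ with transition maps $f\mapsto f\circ\varphi_{mn}$. The range $i\geq1$ then matches $1\leq i\leq r$ in the definition exactly, and both of your directions (the trivial one and the injective-hull one) go through verbatim. The rest of the $\delta$-functor framework is fine. You do not even need the explicit comparison map: effaceability makes $\check{H}^\bullet(\underline{a},-)$ universal, and it agrees with $H^\bullet_I=R^\bullet\Gamma_I$ in degree $0$. The paper itself gives no proof; it cites Schenzel's derived-category argument and a later, simpler proof.
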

Schenzel proved Theorem~\ref{thm:Schenzel} using the theory of derived categories. For a simpler proof, see \cite{Ando2022}.

For Noetherian rings, \v{C}ech cohomology is naturally isomorphic to local cohomology, and local cohomology can be computed by means of \v{C}ech cohomology or Koszul cohomology. If $A$ is Noetherian, then every sequence $\underline{a}=a_1,\dots,a_r\in A$ is weakly proregular, so Schenzel's theorem extends the corresponding result in the Noetherian case. As an example of a result that computes local cohomology in the non-Noetherian setting, we recall that weakly proregular sequences were used by Bhatt--Iyengar--Ma \cite{Bhatt-Iyengar-Ma2019} to characterize regularity.


\begin{thm}[\cite{Bhatt-Iyengar-Ma2019}*{Theorem 4.13}]\label{thm:BIM4.13}
    Let $(A,\ideal{m},k)$ be an excellent local domain. Then $A$ is regular if any one of the following conditions holds.
    \begin{enumerate}
        \item $A$ has positive characteristic and $\Tor_i^A(A_{\textrm{perf}},k) = 0$ for some $i\geq1$.
        \item $A$ has positive characteristic and $\Tor_i^A(A^+,k) = 0$ for some $i\geq1$.
        \item $A$ has mixed characteristic, $\dim A\leq 3$, and $\Tor_i^A(A^+,k) = 0$ for some $i\geq1$.
    \end{enumerate}
\end{thm}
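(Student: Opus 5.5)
The idea is to reduce the statement to the ``faithfully flat'' characterizations of regularity, Theorem~\ref{thm:Kunz} and Theorem~\ref{thm:BIM-kunz}. Let $B$ denote $A_{\mathrm{perf}}$ or $A^+$, as appropriate. The plan is to show in three steps that the single vanishing $\Tor_i^A(B,k)=0$ forces $B$ to be faithfully flat over $A$. The two external inputs are:
\begin{itemize}
\item[(a)] $B$ is a big Cohen--Macaulay algebra in the sense of Theorem~\ref{thm:bigCM}, or at least $A^+$ is. In positive characteristic this is Hochster--Huneke for excellent local domains. In mixed characteristic with $\dim A\leq 3$ it is Heitmann's theorem, and it is exactly why the dimension restriction appears in (3).
\item[(b)] $B$ is perfect, respectively perfectoid, so Frobenius-type maps on $B$ are bijective, respectively surjective modulo $p$.
\end{itemize}
For $A_{\mathrm{perf}}$ in case (1), I would either use that $A^+$ is faithfully flat over $A_{\mathrm{perf}}$ at the level of the relevant Tor computations, or run the argument directly with $A_{\mathrm{perf}}=\varinjlim F^e_*A$.

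\textbf{Step 1 (rigidity).} From $\Tor_i^A(B,k)=0$ for one $i\geq1$, deduce $\Tor_j^A(B,k)=0$ for all $j\geq i$. In characteristic $p$, I would write $\Tor_j^A(B,k)$ as a colimit of $\Tor_j^A(F^e_*A',k)$ over finite extensions $A'$, and use that Frobenius on $B$ is an isomorphism. This should let one pass from vanishing in degree $j$ to vanishing in degree $j+1$, via the long exact sequence of a syzygy combined with the self-similarity $B\cong F_*B$. In mixed characteristic I would imitate this with the perfectoid tilt, or replace it with an almost-math version.

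\textbf{Step 2 (flatness).} Choose a system of parameters $\underline{x}$ of $A$. Since $\underline{x}$ is $B$-regular by (a), the Koszul complex gives $B\otimes^{\mathbf L}_A A/\underline{x}A\simeq B/\underline{x}B$. Base change then yields
\[
\Tor_j^A(B,k)\cong\Tor_j^{A/\underline{x}A}(B/\underline{x}B,k).
\]
By Step 1, $B/\underline{x}B$ has finite flat dimension over the Artinian local ring $A/\underline{x}A$. Over an Artinian local ring such a module is flat: this is the Auslander--Buchsbaum-type fact that finite flat dimension equals depth $0$. Moreover $B/\underline{x}B\neq \mathfrak m B/\underline{x}B$, so it is faithfully flat. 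Combining this with $\Tor_{>0}^A(B,A/\underline{x}A)=0$ gives $\Tor_{>0}^A(B,A/\underline{x}^nA)=0$ for all $n$, and hence $\Tor_1^A(B,N)=0$ for all finite-length $N$. An $\mathfrak m$-adic local flatness argument, which needs care because $B$ is not finitely generated and I would phrase it via Čech/weakly proregular computations as in Theorem~\ref{thm:Schenzel}, then gives that $B$ is faithfully flat over $A$.

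\textbf{Step 3 (conclusion).} In characteristic $p$, a faithfully flat map $A\to B$ with $B$ perfect implies that Frobenius on $A$ is flat by descent, so $A$ is regular by Theorem~\ref{thm:Kunz}. In mixed characteristic, $\widehat{A^+}$ or a suitable perfectoidization is a faithfully flat perfectoid $A$-algebra, so Theorem~\ref{thm:BIM-kunz} applies.

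I expect Step 1 to be the main obstacle. The Frobenius bookkeeping must produce vanishing in all higher degrees from a single degree, and in mixed characteristic this is only available up to almost-zero error terms. The second delicate point is the non-finitely-generated local flatness argument in Step 2.
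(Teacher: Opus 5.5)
There is a genuine gap. The decisive step is missing, and the steps meant to follow it fail as stated.

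\textbf{Step~1.} This is not yet an argument. Perfectness of $B$ does give $B\otimes^{\mathbf L}_A k\simeq B\otimes^{\mathbf L}_A\bigl(A\otimes^{\mathbf L}_{A,F^e}k\bigr)$. But you never explain how this turns $\Tor_j^A(B,k)=0$ into $\Tor_{j+1}^A(B,k)=0$. Half-exactness only spreads the vanishing to $\Tor_i^A(B,N)$ for finite-length $N$ in the \emph{same} degree. Raising the degree means passing through syzygies such as $\mathfrak m$, via $\Tor_{i+1}^A(B,k)\cong\Tor_i^A(B,\mathfrak m)$, and there the non-finite generation of $B$ is exactly the difficulty.

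\textbf{Step~2.} The isomorphism $B\otimes_A K(\underline x;A)\simeq B/\underline xB$ is fine. However, $K(\underline x;A)\simeq A/\underline xA$ holds only if $\underline x$ is $A$-regular, i.e.\ if $A$ is Cohen--Macaulay. An excellent local domain need not be CM (e.g.\ $k[[s^4,s^3t,st^3,t^4]]$), and CM-ness is part of what must be proved. So the isomorphism $\Tor_j^A(B,k)\cong\Tor_j^{A/\underline xA}(B/\underline xB,k)$ and the reduction to an Artinian ring are unjustified.

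\textbf{Input (a) in case (1).} Big CM-ness is unavailable for $A_{\mathrm{perf}}$. Indeed $H^j_{\mathfrak m}(A_{\mathrm{perf}})=\varinjlim_F H^j_{\mathfrak m}(A)$ is nonzero for some $j<\dim A$ whenever $A$ is F-injective but not CM, so $A_{\mathrm{perf}}$ is not big CM. Since $A^+$ is big CM, faithful flatness of $A_{\mathrm{perf}}\to A^+$ would force these groups to vanish, so your proposed workaround is also false.

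\textbf{Local flatness.} Vanishing of $\Tor_1^A(B,N)$ for all finite-length $N$ does not give flatness of a non-finitely generated module, even one on which a s.o.p.\ is regular. Take $A=k[[x,y]]$ and $B=A\oplus A_x/yA_x$. Then $x,y$ is $B$-regular, $B\neq\mathfrak mB$, and $\Tor^A_{>0}(B,N)=0$ for all finite-length $N$, yet $\Tor_1^A(B,A/yA)\neq0$.

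The route sketched in the paper, following Bhatt--Iyengar--Ma, never proves flatness. One shows that a system of parameters of $A$ is weakly proregular on $A_{\mathrm{perf}}$ and on $A^+$, which is much weaker than being a regular sequence. From this one deduces $\Tor_j^A(k,k)=0$ for $j\gg0$ and concludes by Serre's criterion.

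Your detour through faithful flatness and Theorems~\ref{thm:Kunz} and~\ref{thm:BIM-kunz} is also unnecessary once vanishing in all large degrees is known, at least in characteristic $p$. Suppose $B$ is perfect, $\mathfrak mB\neq B$, $\Tor_j^A(B,k)=0$ for all $j\geq i$, and $x_1,\dots,x_d$ is a system of parameters. Let $P=\mathbb F_p[T_1^{1/p^\infty},\dots,T_d^{1/p^\infty}]\to B$ send $T_l\mapsto x_l$. Then $\bar B=B/\sqrt{\mathfrak mB}=B\otimes_P\mathbb F_p$ is a nonzero $k$-vector space of Tor-dimension at most $d$ over $B$, by Tor-independence of perfect rings. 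Hence $\Tor_j^A(k,k)\otimes_k\bar B\cong H_j\bigl(\bar B\otimes^{\mathbf L}_B(B\otimes^{\mathbf L}_A k)\bigr)$ vanishes for $j\geq i+d$, and $A$ is regular. So essentially all of the content lies in the degree-raising Step~1, which your proposal leaves open.
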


This theorem is proved along the following lines. When $\underline{a}$ is taken to be a system of parameters of $A$, one shows that, under the relevant hypotheses, it is weakly proregular over $A_{\mathrm{perf}}$ and over $A^+$. This allows one to deduce the vanishing of $\Tor_i(k,k)$ for sufficiently large $i$, and hence to conclude that $A$ is regular.


We now state the aforementioned generalization of systems of parameters using weakly proregular sequences.

\begin{defi}[\cite{Hamilton-Marley2007}*{Definition 3.1, 4.1}]\label{def:HMCM}
    Let $A$ be a ring. A sequence $\underline{a}=a_1,\dots,a_r\in A$ is called a \textbf{parameter sequence} if it satisfies the following conditions.
    \begin{enumerate}
        \item $\underline{a}$ is a weakly proregular sequence.
        \item $\underline{a}A \neq A$.
        \item For every prime ideal $P$ containing $\underline{a}$, one has $\check{H}^r(\underline{a},A)_P \neq 0$.
    \end{enumerate}
    The sequence $\underline{a}$ is called a \textbf{strong parameter sequence} if $a_1,\dots,a_i$ is a parameter sequence for every $i=1,\dots,r$.
    The ring $A$ is called \textbf{Cohen--Macaulay} if every strong parameter sequence is a regular sequence. In this paper, in order to distinguish this notion from the Noetherian one, we say that $A$ is HMCM.
\end{defi}

If $A$ is Noetherian, then the notions of parameter sequences and systems of parameters coincide. Thus this definition is a generalization of the classical definition for Noetherian rings.

Hamilton--Marley \cite{Hamilton-Marley2007} showed that several classes of rings are indeed HMCM. Examples include the absolute integral closure $A^+$ of an excellent domain $A$ of characteristic $p>0$.



\section{Polynomial grade}\label{sec:pgrade}

In their arguments, Hamilton--Marley use polynomial grade, an extension of the classical notion of grade introduced by Hochster \cite{Hochster1974}. In this section, we briefly review polynomial grade and give a complete proof of a relation, stated without proof in \cite{Hamilton-Marley2007}, between polynomial grade and the supremum of classical grades. We also present a counterexample to the assertion of \cite{Hamilton-Marley2007}*{Proposition~2.7}.


\begin{defi}[\cite{Hochster1974}]
    Let $A$ be a ring and let $M$ be an $A$-module. Let
    $\underline{a}=a_1,\dots,a_r\in A$ be a sequence.
    We say that $\underline{a}$ is a weak $M$-sequence if $a_i$ is a non-zero-divisor on $M/(a_1,\dots,a_{i-1})M$ for each $i$.
\end{defi}

If a weak $M$-sequence $\underline{a}$ satisfies $M/\underline{a}M\neq 0$, then it is nothing but a regular sequence. This terminology follows \cite{Bruns-Herzog1997}. Hochster \cite{Hochster1974} calls such a sequence a possibly improper regular sequence on $M$.


For an ideal $I$, let $\grade_I(M)$ denote the supremum of the lengths of weak $M$-sequences contained in $I$, and let $\mdepth_I(M)$ denote the supremum of the lengths of regular $M$-sequences contained in $I$. For grade, the following fact holds in the Noetherian case.


\begin{lem}\label{lem:grade-positive-noetherian}
    Let $A$ be a Noetherian ring, let $I$ be an ideal of $A$, and let $M$ be a finitely generated $A$-module.
    \[\grade_I(M) > 0 \Longleftrightarrow (0:_M I)\coloneq\mkset{x\in M}{Ix = 0}=0.\]
\end{lem}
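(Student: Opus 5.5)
The plan is to prove the two implications separately. The forward direction is elementary and holds without any finiteness hypothesis. The reverse direction is the classical prime avoidance argument via associated primes.

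($\Rightarrow$) Suppose $\grade_I(M)>0$. Then there is some $a\in I$ that is a non-zero-divisor on $M$, since a weak $M$-sequence of length one in $I$ is just such an element. If $x\in(0:_M I)$, then in particular $ax=0$, and hence $x=0$. Thus $(0:_M I)=0$.

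($\Leftarrow$) Suppose $(0:_M I)=0$; we will produce an $M$-regular element in $I$. Since $A$ is Noetherian and $M$ is finitely generated, $\ass_A(M)$ is a finite set $\{P_1,\dots,P_s\}$. The set of zero divisors on $M$ is $\bigcup_j P_j$, with the convention that this union is empty when $M=0$; in that case every element is regular and the statement is trivial.

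We now argue by contradiction. Suppose $I\subseteq\bigcup_j P_j$. By prime avoidance, $I\subseteq P_j$ for some $j$. Write $P_j=(0:_A x)$ with $0\neq x\in M$. Then $Ix\subseteq P_jx=0$, so $x\in(0:_M I)=0$, which is a contradiction.

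Hence some $a\in I$ lies outside every $P_j$. Such an $a$ is $M$-regular, so it forms a weak $M$-sequence of length $1$ in $I$, and therefore $\grade_I(M)\ge1$.

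The only step with real content is the combination of finiteness of $\ass_A(M)$ with prime avoidance. This is exactly where both the Noetherian hypothesis and the finite generation of $M$ are used. It is also precisely what fails in the non-Noetherian examples alluded to in the Introduction.
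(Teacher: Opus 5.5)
Your proof is correct and follows the same route as the paper: the forward direction is immediate, and the converse combines the finiteness of $\ass M$, the description of the zero-divisors on $M$ as $\bigcup_{P\in\ass M}P$, and prime avoidance. You also write out why $I\not\subset P$ for each $P=(0:_A x)\in\ass M$, a step the paper leaves implicit, and you treat the case $M=0$ separately.
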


\begin{proof}
    The only if direction is clear, so we prove the converse. Suppose that $(0:_M I)=0$. Then $I\not\subset P$ for every $P\in\ass M$. Since $A$ is Noetherian and $M$ is finitely generated, $\ass M$ is finite. Hence, by prime avoidance, $I\not\subset \bigcup_{P\in\ass M}P$. Since the right-hand side is the set of all zero-divisors on $M$, there exists an $M$-regular element in $I$.
\end{proof}

However, once the Noetherian condition is removed, it can happen that $(0:_M I)=0$ while the grade is $0$. In other words, although for every nonzero $x\in M$ there exists $a\in I$ such that $ax\neq 0$, it may still be the case that for every $a\in I$ there exists a nonzero $x\in M$ such that $ax=0$. Such examples can be constructed, for instance, by using idealization.


\begin{defi}[\cite{Nagata1962}]\label{def:idealization}
  Let $A$ be a ring and let $M$ be an $A$-module. The direct sum $A\oplus M$ becomes a ring with the following operations:
    \[(a,x)+(b,y) = (a+b,x+y),\]
    \[(a,x)(b,y) = (ab,ay+bx).\]
    We denote this ring by $A\ast M$ and call it the idealization of $M$ over $A$, or the trivial extension of $A$ by $M$.
\end{defi}

An element $(a,x)\in A\ast M$ is regular on $A\ast M$ if and only if $a$ is regular on both $A$ and $M$.

\begin{ex}[\cite{Vasconcelos1971}]\label{ex:vas}
    Let $k$ be a field, and set $A=k[[x,y]]$, $\ideal{m}=(x,y)$, and $M=\bigoplus_{P\in\spec A, \idht P = 1}A/P$. Then, in $A\ast M$, one has $(0:_{A\ast M}\ideal{m}\ast M)=0$, but $\grade_{\ideal{m}\ast M} A\ast M = 0$.
\end{ex}


Now, if Lemma~\ref{lem:grade-positive-noetherian} is extended to polynomial rings, the following fact holds.

\begin{lem}[\cite{Northcott1976}*{Chapter~5, Theorem~7}]\label{lem:Nor thm7}
    Let $A$ be a ring and let $M$ be an $A$-module. For a finitely generated ideal $I=(a_1,\dots,a_r)$, the condition
    $\grade_{IA[X]}(M\otimes_A A[X]) > 0$ is equivalent to $(0:_M I)=0$.
\end{lem}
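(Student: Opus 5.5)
The plan is to prove both implications directly, using the element $f = a_1 + a_2X + \dots + a_rX^{r-1} \in IA[X]$ as the candidate regular element for the nontrivial direction.

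For the easy direction, suppose $\grade_{IA[X]}(M[X])>0$, where $M[X]=M\otimes_A A[X]$. Then some $g\in IA[X]$ is a non-zero-divisor on $M[X]$. If $x\in(0:_M I)$, every coefficient of $g$ lies in $I$, so $gx=0$ in $M[X]$. Regularity of $g$ forces $x=0$, so $(0:_M I)=0$.

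For the converse, assume $(0:_M I)=0$ and show $f$ is $M[X]$-regular. Suppose $h=\sum_{j=0}^{s} m_jX^j\in M[X]$ is nonzero with $fh=0$. The tool is McCoy's theorem in its module version: if a polynomial $f\in A[X]$ kills a nonzero $h\in M[X]$, then there is a nonzero $m\in M$ with $fm=0$. Given such an $m$, comparing coefficients of $fm=\sum a_{i}mX^{i-1}$ gives $a_im=0$ for all $i$. Hence $m\in(0:_M I)=0$, a contradiction. So $f\in IA[X]$ is a weak $M[X]$-sequence of length one, and $\grade_{IA[X]}(M[X])\ge 1$.

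The main obstacle is the module version of McCoy's theorem, which I would prove by the standard induction. Choose $h\neq0$ with $fh=0$ of minimal degree $s$. If $a_ih=0$ for every $i$, then the nonzero coefficient $m_s$ of $h$ satisfies $fm_s=0$, and we are done. Otherwise, pick the largest index $i$ with $a_ih\neq0$. Then $a_ih$ is nonzero and satisfies $f\cdot a_ih=0$. Its top coefficient is $a_im_s$, and it must vanish: the top coefficient of $fh$ is $a_im_s$ plus terms $a_km_s$ with $k>i$, and those are zero because $a_kh=0$. So $a_ih$ is a nonzero annihilated element of degree $<s$, contradicting minimality.

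The argument is elementary. The only care needed is in the bookkeeping of the leading coefficient, and in noting that when $s=0$ the conclusion is immediate.
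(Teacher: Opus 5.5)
Your proof is correct and follows the route the paper itself takes: the paper only cites Northcott for this lemma, but it proves McCoy's lemma (Lemma~\ref{lem:McCoy}) by the same minimal-degree argument and applies it to the polynomial whose coefficients are the generators $a_i$ in the proofs of Proposition~\ref{prop:not-admissible} and Theorem~\ref{thm:poly-ext-not-hmcm}. One small bookkeeping slip: the coefficient of $X^{s+i-1}$ in $fh$ is $a_im_s+\sum_{k>i}a_km_{s+i-k}$, not $a_im_s$ plus terms $a_km_s$, but these extra terms still vanish because $a_kh=0$ kills every coefficient of $h$, so your conclusion $a_im_s=0$ stands.
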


This leads us to consider the following invariant.

\begin{defi}[\cite{Northcott1976}*{Chapter~5.5}]
    Let $A$ be a ring, let $I$ be an ideal of $A$, and let $M$ be an $A$-module.
    \[\pgrade_I M \coloneq \lim_{n\to\infty}\grade_{IA[X_1,\dots,X_n]}(M[X_1,\dots,X_n]).\]
    We call this the polynomial grade of $M$ with respect to $I$.
\end{defi}

In general, one has
\[
\lim_{n\to\infty}\grade_{IA[X_1,\dots,X_n]}(M[X_1,\dots,X_n])
\leq \sup_B \grade_{IB}(M\otimes_A B),
\]
where $B$ ranges over all faithfully flat $A$-algebras. In \cite{Hamilton-Marley2007}, it is mentioned without proof that equality holds in this formula. In this section, we give a complete proof; see Theorem~\ref{thm:pgrade-ff-sup}.

Moreover, when $(I,M)$ is admissible in the sense of Definition~\ref{def:adm}, Hochster \cite{Hochster1974} takes the right-hand side as the definition of a non-classical grade.



\begin{defi}[\cite{Hochster1974}]\label{def:adm}
    We say that $(I,M)$ is \textbf{admissible} if, for every faithfully flat $A$-algebra $B$, every weak $(M\otimes_A B)$-sequence contained in $IB$ is a regular sequence on $M\otimes_A B$.
\end{defi}

If $IM\neq M$, then $(I,M)$ is admissible. Moreover, when $M$ is finitely generated, the condition $IM\neq M$ is equivalent to the admissibility of $(I,M)$. Thus, if $M$ is finitely generated and $IM=M$, then $(I,M)$ is not admissible.


\begin{prop}[\cite{Hochster1974}*{Section 1, Proposition 2}]\label{prop:Hoc prop2}
    Let $A$ be a ring, let $I$ be an ideal of $A$, and let $M$ be an $A$-module. Let $B$ be a faithfully flat $A$-algebra. If $(I,M)$ is admissible, then there exists $n\geq 0$ such that $\grade_{IB}(M\otimes_A B)\leq\grade_{IA[X_1,\dots,X_n]}(M[X_1,\dots,X_n])$.
\end{prop}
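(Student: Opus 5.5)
We argue by induction on the length of a weak $(M\otimes_A B)$-sequence in $IB$. Let $b_1,\dots,b_s\in IB$ be a weak $(M\otimes_A B)$-sequence; since $(I,M)$ is admissible it is a regular sequence, so $N_s\coloneq (M\otimes_A B)/(b_1,\dots,b_s)(M\otimes_A B)\neq 0$. It suffices to find $n$ and a weak $M[X_1,\dots,X_n]$-sequence of length $s$ in $IA[X_1,\dots,X_n]$. The natural candidates are generic linear combinations: write $I$'s relevant generators (we may first reduce to a finitely generated subideal $I_0\subseteq I$ containing the finitely many coefficients needed to express $b_1,\dots,b_s$ as elements of $I_0B$), say $I_0=(a_1,\dots,a_r)$, and take $f_j=\sum_i a_iX_{ij}$ in $A[X_{ij}]$ with $n=rs$ variables.

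The key step is the case $s=1$, which is essentially Lemma~\ref{lem:Nor thm7}: if $IB$ contains an $(M\otimes_AB)$-regular element $b$, then $(0:_{M\otimes B} I_0)=0$, and by faithful flatness of $B$ (the annihilator $(0:_M I_0)$ is the kernel of $M\to M^r$, and tensoring with $B$ preserves this kernel) we get $(0:_M I_0)=0$. Then Lemma~\ref{lem:Nor thm7}, or more precisely its proof, shows $f=\sum a_iX_i$ is $M[X]$-regular (McCoy/Dedekind–Mertens style content argument).

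For the induction step, pass from $M$ to $M'=M[X_1,\dots,X_r]/f_1M[X_1,\dots,X_r]$ over $A'=A[X_1,\dots,X_r]$, with $B'=B[X_1,\dots,X_r]$ faithfully flat over $A'$. The difficulty is that $b_1$ and $f_1$ are different elements, so one must compare $(M\otimes B)/b_1$ with $M'\otimes_{A'}B'$. Here I would use the Koszul/Ext characterization of grade, which is independent of the chosen sequence: the classical fact that for a finitely generated ideal, $\grade$ is detected by vanishing of $\operatorname{Ext}^i(A/I_0,-)$ or Koszul homology after adjoining variables. Concretely, I would show by induction that $H^i(\underline{a};M)=0$ for $i<s$ (Koszul cohomology), using that Koszul cohomology commutes with the flat base change $A\to B$ and that $H^i(\underline{a};M\otimes B)=0$ for $i<s$ because $IB$ contains a regular sequence of length $s$ (standard: multiplication by $b_j$ is both zero and injective on the relevant cohomology). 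Faithful flatness then gives $H^i(\underline{a};M)=0$ for $i<s$, and Northcott's theorem that vanishing of lower Koszul cohomology implies the generic forms $f_1,\dots,f_s$ form a weak $M[X_{ij}]$-sequence finishes the argument.

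The main obstacle is this last implication (Koszul vanishing $\Rightarrow$ generic forms are a weak sequence) for non-Noetherian, non-finitely-generated $M$; I would prove it by induction on $s$ using Lemma~\ref{lem:Nor thm7} for the first form and the long exact Koszul sequence for multiplication by $f_1$ on $M[X]$ to transfer vanishing of $H^i$, $i<s-1$, to $M[X]/f_1M[X]$.
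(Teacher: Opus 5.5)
Your argument is correct. The paper gives no proof of its own for this statement; it only cites Hochster. Your final route is the standard one, and it is exactly the mechanism of Proposition~\ref{prop:not-admissible} with a descent step placed in front. The steps are:
\begin{enumerate}
\item Reduce to a finitely generated $I_0=(a_1,\dots,a_r)\subseteq I$ with $b_1,\dots,b_s\in I_0B$.
\item Show $H^i(\underline a;M\otimes_AB)=0$ for $i<s$. Each $b_j$ annihilates Koszul cohomology, and you induct along $M\otimes_AB\to (M\otimes_AB)/b_1(M\otimes_AB)$.
\item Descend this to $H^i(\underline a;M)=0$ by flat base change and faithful flatness.
\item Build the generic forms one at a time. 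McCoy (Lemma~\ref{lem:McCoy}) handles the first form. The long exact Koszul sequence for multiplication by $f_1$ carries the vanishing in degrees $<s-1$ over to $M[X]/f_1M[X]$.
\end{enumerate}
The key observation, which you correctly made, is that Koszul vanishing is an invariant of $(I_0,M)$. It does not depend on the chosen sequence and it commutes with flat base change. So you never need to compare $b_1$ with $f_1$, and your abandoned direct induction can simply be dropped.

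Two small remarks.

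First, admissibility is never used. You only produce weak sequences, and $\grade$ here counts weak sequences. So your argument shows, for every pair $(I,M)$, that each finite $s\le\grade_{IB}(M\otimes_AB)$ is bounded by $\grade_{IA[X_1,\dots,X_n]}(M[X_1,\dots,X_n])$ for some $n$. Admissibility matters only if you want the resulting sequences in $IA[X_1,\dots,X_n]$ to be regular rather than weak. Even then it applies directly, because $A[X_1,\dots,X_n]$ is faithfully flat over $A$.

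Second, your sentence ``it suffices to find a weak sequence of length $s$'' presupposes $\grade_{IB}(M\otimes_AB)=s<\infty$. The left-hand side can be infinite when $I$ is not finitely generated, for example $A=B=M=k[y_1,y_2,\dots]$ and $I=(y_1,y_2,\dots)$. In that case your $n=rs$ grows with $s$. You then obtain only that $\grade_{IA[X_1,\dots,X_n]}(M[X_1,\dots,X_n])\to\infty$, not a single $n$ with infinite grade as the literal statement asks. That limit form is all that Theorem~\ref{thm:pgrade-ff-sup} uses, but you should state the infinite case separately rather than fold it into the finite one.
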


It follows that, when $IM\neq M$, the polynomial grade of $M$ with respect to $I$ is equal to the supremum of $\grade_{IB}(M\otimes_A B)$ over all faithfully flat $A$-algebras $B$.


We now consider the case $IM=M$. If $M$ is finitely generated, then the left-hand side is $\infty$ by Nakayama's lemma, and hence the equality follows.

\begin{prop}
    Let $A$ be a ring, let $I$ be an ideal of $A$, and let $M$ be a finitely generated $A$-module such that $IM=M$. Then $\grade_I M=\infty$. In particular, $\lim\grade(M[X_1,\dots,X_n])=\infty$, and the desired equality holds.
\end{prop}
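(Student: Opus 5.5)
The idea is to use Nakayama's lemma to produce an element of $I$ that annihilates $M$. Such an element is vacuously a non-zero-divisor on every quotient of $M$, which gives weak $M$-sequences of every length.

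Since $M$ is finitely generated and $IM=M$, the determinant form of Nakayama's lemma (Cayley--Hamilton) gives $a\in I$ with $(1-a)M=0$. Equivalently, there is $b=1-a\in 1+I$ with $bM=0$. This step needs no Noetherian hypothesis and no finite generation of $I$.

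Next I will show that the constant sequence $a,a,\dots,a$ of any length $r$ is a weak $M$-sequence in $I$. For each $i$, the quotient $N=M/(a,\dots,a)M$ is still annihilated by $1-a$, so $ax=x$ for every $x\in N$. Hence $ax=0$ forces $x=0$, so $a$ is a non-zero-divisor on $N$; in fact multiplication by $a$ is the identity on $N$. In particular $N=aN=0$ already after the first step, and all later conditions hold vacuously. Since $r$ is arbitrary, $\grade_I M=\infty$.

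For the polynomial statement, set $M'=M[X_1,\dots,X_n]=M\otimes_A A[X_1,\dots,X_n]$. This is a finitely generated $A[X_1,\dots,X_n]$-module satisfying $(IA[X_1,\dots,X_n])M'=M'$. Alternatively, the same element $a$ satisfies $(1-a)M'=0$ directly. The argument above therefore gives $\grade_{IA[X_1,\dots,X_n]}M'=\infty$ for every $n$, so the limit is $\infty$. The inequality $\lim\leq\sup_B$ stated earlier then forces $\sup_B\grade_{IB}(M\otimes_A B)=\infty$ as well, which is the desired equality.

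The main point needing care is the first step: one must make sure that the Nakayama/determinant trick yields an element of $I$ itself, not merely a statement about the vanishing of $M$. The rest is formal.
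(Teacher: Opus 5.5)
Your proof is correct and follows essentially the same route as the paper. There, Nakayama's lemma produces $a$ with $a+1\in I$ and $aM=0$, which is your element of $I$ acting as the identity on $M$ up to sign, and repeating it gives weak $M$-sequences in $I$ of arbitrary length. Your remarks on $M[X_1,\dots,X_n]$ and on the inequality $\lim\le\sup_B$ simply spell out the ``in particular'' clause, which the paper leaves implicit.
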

\begin{proof}
    Since $IM=M$, Nakayama's lemma gives an element $a\in A$ such that $a+1\in I$ and $aM=0$. Then $a+1$ is $M$-regular, and $M/(a+1)M=0$. Hence we can continue by choosing $a+1$ repeatedly, obtaining a sequence $a+1,a+1,\dots$. Thus there are weak $M$-sequences contained in $I$ of arbitrary length, and hence $\grade_I(M)=\infty$.
\end{proof}

Thus the remaining issue is the case where $IM=M$ and $M$ is not finitely generated.


\begin{ex}\label{ex:IM=M}
    Let $A=\Z$, $I=2\Z$, and $M=\mkset{a/2^n+\Z}{a\in \Z,\ n\geq 0}\subset \Q/\Z$. Then $IM=M$, but no element of $I$ can be $M$-regular. Thus $\grade_I M=0$. Hence, when $IM=M$, it is not necessarily true that $\grade_I(M)=\infty$. In this example, since $I$ is principal, one has $\grade_{IB}(M\otimes_A B)=0$ for every faithfully flat $A$-algebra $B$. Consequently, the desired equality holds, with both sides equal to $0$.

\end{ex}

The condition $\grade_I M=0$ does not necessarily imply that $\pgrade_I M=0$. In particular, in Vasconcelos' example (Example~\ref{ex:vas}), Lemma~\ref{lem:Nor thm7} shows that the polynomial grade is positive, although in that example one has $IM\neq M$.


Moreover, if $(I,M)$ is not admissible, then the following holds.

\begin{lem}
    Let $A$ be a ring, let $I$ be an ideal of $A$, and let $M$ be an $A$-module. If $(I,M)$ is not admissible, then
    \[
    \sup_B \grade_{IB}(M\otimes_A B)=\infty,
    \]
    where $B$ ranges over all faithfully flat $A$-algebras.
\end{lem}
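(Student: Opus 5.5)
Since $(I,M)$ is not admissible, Definition~\ref{def:adm} provides a faithfully flat $A$-algebra $B$ and a weak $(M\otimes_A B)$-sequence $b_1,\dots,b_r$ in $IB$ that is not a regular sequence on $M\otimes_A B$. The plan is to build, from this single failure, faithfully flat algebras over which there are weak sequences in $I$ of every length.

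First, note what the failure means. By definition, $b_1,\dots,b_r$ fails to be regular exactly when $(M\otimes_A B)/(b_1,\dots,b_r)(M\otimes_A B)=0$. Put $N=M\otimes_A B$. Then $b_1,\dots,b_r$ is a weak $N$-sequence with $N/\underline{b}N=0$. Now any element $c$ of $IB$ is a non-zero-divisor on the zero module. Hence $b_1,\dots,b_r,c,c,\dots,c$ (with $c$ repeated $k$ times) is again a weak $N$-sequence in $IB$, for every $k\geq 0$. Taking $c=b_1$, or $c=0$, which is regular on the zero module, gives $\grade_{IB}(N)\geq r+k$ for all $k$. So $\grade_{IB}(M\otimes_A B)=\infty$ for this particular $B$.

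This already gives
\[
\sup_{B}\grade_{IB}(M\otimes_A B)\geq \grade_{IB}(M\otimes_A B)=\infty,
\]
which proves the lemma.

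The only point needing care is the convention that a weak sequence may continue after the quotient has become zero. This is exactly how the paper argues in the preceding proposition, where it repeats $a+1$ after $M/(a+1)M=0$. In that convention every element of $IB$ is a (trivially) non-zero-divisor on the zero module, so no real obstacle remains. It is still worth checking that the definition of $\grade_I$ as a supremum of lengths of weak sequences imposes no properness condition, and this is indeed the case as stated.
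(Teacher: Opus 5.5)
Your proof is correct and is essentially the paper's argument. Non-admissibility gives a faithfully flat $B$ and a weak $(M\otimes_A B)$-sequence $\underline{b}$ in $IB$ with $(M\otimes_A B)/\underline{b}(M\otimes_A B)=0$, and padding $\underline{b}$ with repeated elements of $IB$ yields weak sequences of arbitrary length, so $\grade_{IB}(M\otimes_A B)=\infty$ already for that $B$. Allowing $c=0$ is a useful touch, since it also covers the degenerate case $r=0$ (that is, $M\otimes_A B=0$), where there is no $b_1$ to repeat.
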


\begin{proof}
    There exist a faithfully flat $A$-algebra $B$ and a sequence $\underline{b}$ contained in $IB$ which is a weak $(M\otimes_A B)$-sequence but is not a regular sequence on $M\otimes_A B$. Thus $\underline{b},b_0,b_0,\dots$ is a weak $(M\otimes_A B)$-sequence of arbitrary length.
\end{proof}

We may therefore assume that $(I,M)$ is not admissible. It remains to show that $\lim\grade(M[X_1,\dots,X_n])=\infty$. We first prepare for this.


\begin{lem}[\cite{McCoy1942}]\label{lem:McCoy}
    Let $A$ be a ring and let $M$ be an $A$-module. A polynomial $f\in A[X]$ is a zero-divisor on the $A[X]$-module $M[X]$ if and only if, writing $f=a_0+\cdots+a_nX^n$, there exists a nonzero element $x\in M$ such that $a_i x=0$ for every $i$.
\end{lem}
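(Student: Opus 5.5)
The statement is McCoy's theorem for modules: $f=a_0+\cdots+a_nX^n$ is a zero-divisor on $M[X]$ if and only if some nonzero $x\in M$ is killed by all $a_i$. I will prove the nontrivial direction by the classical McCoy minimal-degree argument, adapted from rings to modules.

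The ``if'' direction is immediate. If $x\neq0$ and $a_ix=0$ for all $i$, then $f\cdot x=\sum_i a_ix X^i=0$, where $x$ is viewed as a constant in $M[X]$. So $f$ kills a nonzero element and is a zero-divisor.

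For ``only if'', suppose $f g=0$ for some nonzero $g\in M[X]$. Choose such a $g=m_0+m_1X+\cdots+m_dX^d$ with $m_d\neq0$ and $d$ minimal. The claim is that $a_i g=0$ for every $i$. Suppose not, and let $j$ be the largest index with $a_jg\neq 0$. Then $a_ig=0$ for all $i>j$, so
\[
0=fg=(a_0+\cdots+a_jX^j)g.
\]
Comparing the coefficients of $X^{j+d}$ gives $a_jm_d=0$. Hence $a_jg$ has degree strictly less than $d$. It is nonzero by the choice of $j$, and it satisfies $f\cdot(a_jg)=a_j(fg)=0$. This contradicts the minimality of $d$.

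Therefore $a_ig=0$ for all $i$. In particular $a_im_d=0$ for all $i$, so $x=m_d\neq0$ is the required element.

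The only point needing care is the choice of the largest index $j$ with $a_jg\neq0$, which is what makes the top coefficient of the product equal to $a_jm_d$. The argument uses only that $M[X]=M\otimes_AA[X]$ is an $A[X]$-module with the evident coefficientwise multiplication, so no finiteness hypotheses are needed.
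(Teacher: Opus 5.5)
Your proof is correct and is essentially the paper's argument. Both take a nonzero $g$ of minimal degree with $fg=0$ and show $a_ig=0$ for all $i$ by reading off the top coefficient $a_jm_d$. The paper runs this as a downward induction starting from $a_n$, whereas you take the largest $j$ with $a_jg\neq0$ and derive a contradiction with minimality.
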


\begin{proof}   
    The if direction is clear.
    Set
    \[
    m\coloneq\min\mkset{k\in\N}{\text{there exists $0\neq g\in M[X]$ with $\deg g=k$ and $fg=0$}}.
    \]
    Choose $g=x_0+\cdots+x_mX^m\in M[X]$ such that $fg=0$ and $\deg g=m$. Put $h\coloneq a_ng\in M[X]$. Since $fg=0$, we have $a_nx_m=0$, and hence $\deg h<m$. On the other hand, $fh=0$, so by the choice of $m$, we must have $h=0$. Thus $fg=(a_0+\cdots+a_{n-1}X^{n-1})g=0$. If we put $h'\coloneq a_{n-1}g$, then the same argument gives $a_{n-1}x_m=0$, hence $\deg h'<m$, and since $fh'=0$, we must have $h'=0$. Repeating this argument, we see that $x_m$ is annihilated by all the coefficients $a_i$.
\end{proof}

\begin{prop}\label{prop:not-admissible}
    Suppose that $(I,M)$ is not admissible. Then
    \[
    \lim_{n\to\infty}\grade_{IA[X_1,\dots,X_n]}(M[X_1,\dots,X_n])=\infty.
    \]
\end{prop}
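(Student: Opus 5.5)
The plan is to pass through Koszul homology, which is insensitive to faithfully flat base change. Let $I=(a_1,\dots,a_r)$, and write $K_\bullet(\underline{a};N)$ for the Koszul complex and $H_\bullet(\underline{a};N)$ for its homology. We show in two steps that $H_i(\underline{a};M)=0$ for \emph{every} $i$, including $i=0$. We then use Lemma~\ref{lem:Nor thm7} repeatedly to build weak sequences of arbitrary length after adjoining enough variables.

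\emph{Step 1: total Koszul acyclicity over $B$.} Since $(I,M)$ is not admissible, there are a faithfully flat $A$-algebra $B$ and a weak $N$-sequence $\underline{b}=b_1,\dots,b_s\subset IB$, where $N=M\otimes_A B$, such that $N=\underline{b}N$. A weak sequence gives $H_i(\underline{b};N)=0$ for $i\geq 1$, by the usual induction via the long exact Koszul sequence for adding one element. Also $H_0(\underline{b};N)=N/\underline{b}N=0$. Hence $K_\bullet(\underline{b};N)$ is exact.

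\emph{Step 2: transfer to $\underline{a}$ and descend to $A$.} Consider the double complex $K_\bullet(\underline{a})\otimes K_\bullet(\underline{b})\otimes N$ over $B$. Since $K_\bullet(\underline{a})$ is a bounded complex of finite free modules and $K_\bullet(\underline{b};N)$ is exact, the total complex is exact. On the other hand, each $b_j$ lies in $\underline{a}B$, so multiplication by $b_j$ is null-homotopic on $K_\bullet(\underline{a};N)$. Hence $K_\bullet(\underline{b})\otimes K_\bullet(\underline{a};N)$ is homotopy equivalent to $K_\bullet(\underline{a};N)\otimes\bigwedge B^s$ with zero differential in the second factor. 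This is the standard fact that the Koszul complex on elements acting by zero up to homotopy splits. Its homology is therefore a direct sum of shifted copies of $H_\bullet(\underline{a};N)$, and the exactness of the total complex forces $H_\bullet(\underline{a};N)=0$. Since $H_i(\underline{a};N)\cong H_i(\underline{a};M)\otimes_A B$ and $B$ is faithfully flat, we get $H_i(\underline{a};M)=0$ for all $i$.

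\emph{Step 3: building long weak sequences.} In particular $(0:_M I)=H_r(\underline{a};M)=0$. By Lemma~\ref{lem:Nor thm7}, $f_1=\sum_i a_iX_1^{i}$ lies in $IA[X_1]$ and is $M[X_1]$-regular. Koszul homology commutes with the flat extension $A\to A[X_1]$, so $H_\bullet(\underline{a};M[X_1])=0$. The long exact sequence attached to
\[0\to M[X_1]\xrightarrow{f_1}M[X_1]\to M_1\to 0,\qquad M_1=M[X_1]/f_1M[X_1],\]
then gives $H_\bullet(\underline{a};M_1)=0$. So $M_1$ satisfies the same hypothesis over $A[X_1]$. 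Repeating with a new variable each time produces $f_1,\dots,f_n$ with $f_k\in IA[X_1,\dots,X_k]$ that form a weak $M[X_1,\dots,X_n]$-sequence in $IA[X_1,\dots,X_n]$. At each stage, adjoining the later variables is flat, so the regularity of the earlier $f_k$ persists on the extended modules. Hence $\grade_{IA[X_1,\dots,X_n]}M[X_1,\dots,X_n]\geq n$ for every $n$, and the limit is $\infty$.

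The main obstacle is Step~2, the comparison of Koszul homology for $\underline{b}$ and for $\underline{a}$ when $\underline{b}\subset \underline{a}B$. I would first try the splitting argument described there. If it proves awkward, a fallback is induction on $s$ using the exact triangle for $K_\bullet(b_j)$: since $b_j$ annihilates $H_\bullet(\underline{a};-)$, the long exact sequence splits into short exact sequences, so vanishing passes between the two Koszul complexes.
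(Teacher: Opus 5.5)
Your proof is correct. Step~3 coincides with the paper's proof: $(0:_M\underline a)=H_r(\underline a;M)=0$ gives an $M[X_1]$-regular element of $\underline aA[X_1]$, flatness and the long exact Koszul sequence carry total acyclicity to $M[X_1]/f_1M[X_1]$, and you iterate with a fresh variable.

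The difference is in how you obtain a finite sequence in $I$ on which $M$ is totally Koszul-acyclic. The paper takes this as a black box from \cite{Hochster1974}*{Section~1, Proposition~3}. You derive it directly from the definition of non-admissibility:
\begin{enumerate}
\item A weak $N$-sequence $\underline b\subset IB$ with $N=\underline bN$ has exact Koszul complex.
\item Each $b_j\in\underline aB$ acts null-homotopically on $K_\bullet(\underline a;N)$. Since cones of homotopic maps are isomorphic, the exact complex $K_\bullet(\underline b)\otimes K_\bullet(\underline a;N)$ is a direct sum of shifted copies of $K_\bullet(\underline a;N)$, which forces $H_\bullet(\underline a;N)=0$.
\item Faithfully flat descent then gives $H_\bullet(\underline a;M)=0$ over $A$.
\end{enumerate}
This makes the argument self-contained and shows explicitly that any finite generating set of $I$ works. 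The paper's version is shorter but leans entirely on the external reference.

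Two small points. First, you assume $I=(a_1,\dots,a_r)$, whereas the statement does not require $I$ to be finitely generated. The fix is immediate: replace $I$ by the subideal generated by the finitely many elements of $I$ needed to express $b_1,\dots,b_s$. The resulting $f_k$ still lie in $IA[X_1,\dots,X_k]$. Second, Lemma~\ref{lem:Nor thm7} as stated only guarantees \emph{some} regular element of $IA[X_1]$. Regularity of the specific element $f_1=\sum a_iX_1^i$ is McCoy's Lemma~\ref{lem:McCoy}. Either suffices for your iteration, because the long exact sequence argument works for any regular element in $\underline aA[X_1,\dots,X_k]$.
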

\begin{proof}
    By \cite{Hochster1974}*{Section~1, Proposition~3}, there exists a sequence
    $\underline{a}=a_1,\dots,a_r\subset I$ such that
    $H_i(\underline{a},M)=0$
    for every $i$.

    Consider the Koszul complex $K_\bullet(\underline{a},M)$. Its differential in degree $r$ is given by
    \[
        d_r\colon M\to M^r;
        x\mapsto (a_1x,-a_2x,\dots,\pm a_rx).
    \]
    Hence
    \[
        H_r(\underline{a},M)
        =
        \ker d_r
        =
        \mkset{x\in M}{\text{$a_i x=0$ for every $i$}}.
    \]
    Since all Koszul homology groups vanish, we have
    \[
        \mkset{x\in M}{\text{$a_i x=0$ for every $i$}}=0.
    \]
    Therefore, by Lemma~\ref{lem:McCoy}, the element
    $u_1\coloneq \sum_{j=1}^r a_jX_1^j$
    is regular on $M[X_1]$.

    For each $i$, we have
    \[
        H^{A[X_1]}_i(\underline{a},M[X_1])
        =
        H^A_i(\underline{a},M)\otimes_A A[X_1]
        =
        0.
    \]
    Moreover, considering the long exact sequence of Koszul homology induced by the short exact sequence
    \[
    \begin{tikzcd}
        0\arrow[r]
        & {M[X_1]}
        \arrow[r, "u_1\cdot"]
        & {M[X_1]}
        \arrow[r]
        & {M[X_1]/u_1M[X_1]}
        \arrow[r]
        & 0,
    \end{tikzcd}
    \]
    we obtain $H^{A[X_1]}_i
        \bigl(\underline{a},M[X_1]/u_1M[X_1]\bigr)
        =
        0.$

    Now set
    $u_2\coloneq \sum_{j=1}^r a_jX_2^j.$
    The same argument shows that $u_2$ is regular on $M[X_1]/u_1M[X_1]$, and that
    \[
        H^{A[X_1,X_2]}_i
        \bigl(
            \underline{a},
            M[X_1,X_2]/u_1M[X_1,X_2]
        \bigr)
        =
        0.
    \]
    Continuing in this way, for each $n$ we obtain a weak $M[X_1,\dots,X_n]$-sequence $u_1,\dots,u_n
    $
    contained in $IA[X_1,\dots,X_n]$. Hence
    \[
        \lim_{n\to\infty}
        \grade_{IA[X_1,\dots,X_n]}(M[X_1,\dots,X_n])
        =
        \infty.
    \]
\end{proof}

\begin{thm}\label{thm:pgrade-ff-sup}
    Let $A$ be a ring, let $I$ be an ideal of $A$, and let $M$ be an $A$-module. Then
    \[
    \lim_{n\to\infty}\grade_{IA[X_1,\dots,X_n]} M[X_1,\dots,X_n]
    =
    \sup_B \grade_{IB}(M\otimes_A B),
    \]
    where $B$ ranges over all faithfully flat $A$-algebras.
\end{thm}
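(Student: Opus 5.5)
The proof assembles the results above by splitting into two cases according to whether $(I,M)$ is admissible. Two preliminary facts are needed throughout.

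First, the limit on the left exists in $\mathbb{N}\cup\{\infty\}$. The sequence $\grade_{IA[X_1,\dots,X_n]}(M[X_1,\dots,X_n])$ is nondecreasing in $n$: a weak sequence on $M[X_1,\dots,X_n]$ stays a weak sequence on $M[X_1,\dots,X_{n+1}]$, since $A[X_1,\dots,X_{n+1}]$ is flat over $A[X_1,\dots,X_n]$. Flat base change preserves injectivity of multiplication maps on the successive quotients.

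Second, the inequality ``$\leq$'' holds in general. Each $A[X_1,\dots,X_n]$ is a faithfully flat $A$-algebra, so every term of the limit is bounded by the supremum.

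For the reverse inequality, suppose first that $(I,M)$ is admissible. Let $B$ be any faithfully flat $A$-algebra. Proposition~\ref{prop:Hoc prop2} gives some $n$ with $\grade_{IB}(M\otimes_A B)\leq\grade_{IA[X_1,\dots,X_n]}(M[X_1,\dots,X_n])$. The right-hand side is at most the limit, by monotonicity. Taking the supremum over $B$ then yields ``$\geq$''.

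Now suppose $(I,M)$ is not admissible. By the lemma above, the right-hand side equals $\infty$. By Proposition~\ref{prop:not-admissible}, the left-hand side also equals $\infty$. Hence equality holds in this case as well.

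No step here is genuinely hard, since the substantive work sits in Proposition~\ref{prop:Hoc prop2} and Proposition~\ref{prop:not-admissible}. The one point to check with care is how the statement is quantified. Theorem~\ref{thm:pgrade-ff-sup} allows an arbitrary ideal $I$, whereas the proof of Proposition~\ref{prop:not-admissible} works inside $I$ through a finite sequence $\underline{a}\subset I$ taken from Hochster's result. That argument never uses finite generation of $I$, only the existence of the finite sequence $\underline{a}$. I would therefore verify that Hochster's Proposition~3 indeed supplies such a sequence for arbitrary $I$, and that Proposition~\ref{prop:Hoc prop2} is stated without a finiteness hypothesis on $I$. If either did require $I$ to be finitely generated, I would reduce to finitely generated subideals. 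Grades of this kind are computed by finite sequences, so both sides are suprema over the finitely generated subideals $J\subseteq I$, and the case analysis applies to each $(J,M)$.
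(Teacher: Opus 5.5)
Your proposal is correct and follows the paper's proof. The paper splits on whether $(I,M)$ is admissible: Proposition~\ref{prop:Hoc prop2} gives ``$\ge$'' in the admissible case, and otherwise the preceding lemma together with Proposition~\ref{prop:not-admissible} makes both sides infinite. Your extra points (monotonicity in $n$, the general inequality ``$\le$'', and the observation that finite generation of $I$ is never needed) only make explicit what the paper states in the discussion around the theorem.
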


As mentioned above, this equality is stated without proof in
\cite{Hamilton-Marley2007}, although it may be well known to experts.

\begin{proof}
    If $(I,M)$ is admissible, then the equality follows from Proposition~\ref{prop:Hoc prop2}. If $(I,M)$ is not admissible, then the equality follows from Proposition~\ref{prop:not-admissible}.
\end{proof}

\begin{rem}\label{remark:hm-counterexample}
    We also mention a counterexample to an assertion stated in \cite{Hamilton-Marley2007}.
    According to \cite{Hamilton-Marley2007}*{Proposition~2.7}, if $A$ is a ring, $I=(a_1,\dots,a_r)$ is a finitely generated ideal, and $M$ is an $A$-module, then
    \[\begin{aligned}
        \pgrade_I(M) &= \sup\mkset{k \ge 0}{H_{r-i}(\underline{a}, M) = 0 \text{ for all } i < k} \\
        &= \sup\mkset{k \ge 0}{\check{H}_I^i(M) = 0 \text{ for all } i < k}. 
    \end{aligned}\tag{$\ast$}\]

    The proposition also contains the following ``Moreover'' assertion:
    \[
        \pgrade_I(M)<\infty \quad \Longleftrightarrow \quad IM\neq M.
    \]
    We show that this assertion is not valid in general. When $IM=M$ and $M$ is finitely generated, there is no problem, since Nakayama's lemma gives $\grade_I M=\infty$.
    The problematic case is when $IM=M$ and $M$ is not finitely generated, and Example~\ref{ex:IM=M} gives a counterexample.
    In that example, one has $\pgrade_I(M)=0$. Moreover, direct computations of the Koszul homology and \v{C}ech cohomology show that the two right-hand sides in $(\ast)$ are also equal to $0$. Thus this example does not contradict the identities in $(\ast)$, and the proof given in \cite{Hamilton-Marley2007} appears to establish those identities. What fails is only the additional ``Moreover'' assertion stated without proof.

\end{rem}

\section{Polynomial extensions of HMCM rings}\label{sec:poly-HMCM}

For Cohen--Macaulayness in the sense of Hamilton--Marley, stability under polynomial extensions is not known in general. Indeed, \cite{Hamilton-Marley2007}*{Corollary~4.6} implies that if $A[X]$ is HMCM, then $A$ is HMCM, but it had remained open whether $A[X]$ is HMCM whenever $A$ is HMCM.


In this section, we construct an example of an HMCM ring $A$ such that $A[X]$ is not HMCM; see Corollary~\ref{cor:A is HMCM but A[X] is not so}. If $A$ is Noetherian, then of course $A[X]$ is also Noetherian, and if $A$ is Cohen--Macaulay, then so is $A[X]$. However, even if $A$ is coherent, it is not known in general whether $A[X]$ is coherent. Recall that a ring $A$ is called stably coherent if $A$ is coherent and, for every positive integer $n$, the polynomial ring $A[X_1,\ldots,X_n]$ is coherent. Just as there exist coherent rings that are not stably coherent, such as the examples of Soublin and Alfonsi \cite{Glaz2000}*{Example~8}, \cite{Soublin1968}, \cite{Alfonsi1981}, various properties over non-Noetherian rings are not preserved under polynomial extensions in general. The example constructed in this paper belongs to this circle of phenomena.


We also isolate the structure of the proof of the HMCM property for polynomial rings over finite-dimensional Pr\"ufer domains, in particular over valuation rings, given in \cite{Kim-Walker2020}*{Theorem~25}. We show that, if $A$ is stably coherent and has finite weak global dimension, then every polynomial ring $A[X_1,\dots,X_n]$ in finitely many variables is HMCM. Although their statement assumes finite dimensionality, this reformulation shows that this assumption is in fact unnecessary.


In what follows, we denote by $\Reg(A)$ the set of all non-zero-divisors of a ring $A$. We call the localization $Q(A)=\operatorname{Reg}(A)^{-1}A$ the total ring of fractions of $A$. We also say that an element $a\in A$ is a parameter element if it is a parameter sequence when regarded as a sequence of length one.


The following criterion, which is implicit in the proof of \cite{Mahdikhani-Sahandi-Shirmohammadi2018}*{Theorem~4.11}, ensures that an idealization is not HMCM. We use it to construct examples showing that the HMCM property is preserved neither under polynomial extensions (Corollary~\ref{cor:A is HMCM but A[X] is not so}) nor under localization (Corollary~\ref{cor:hmcm-not-local}).


\begin{prop}\label{prop:idealization-obstruction}
  Let $A$ be a ring, and let $M\neq 0$ be an $A$-module. Suppose that there exists an element $a\in\Reg(A)$ such that $a\notin A^\times$ and $aM=0$. Set $B\coloneq A\ast M$. Then $B$ is not HMCM.
\end{prop}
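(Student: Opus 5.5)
The plan is to exhibit an explicit strong parameter sequence of length one in $B=A\ast M$ that is not $B$-regular. The natural candidate is $b\coloneq (a,0)\in B$. It is a zero-divisor on $B$: for any $0\neq x\in M$ we have $(a,0)(0,x)=(0,ax)=0$, and $(0,x)\neq 0$. This is consistent with the criterion recalled after Definition~\ref{def:idealization}, since $a$ is not regular on $M$. So it suffices to show that $b$ is a parameter element in the sense of Definition~\ref{def:HMCM}, i.e.\ that it satisfies conditions (1)--(3) there.

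\emph{Weak proregularity.} For a single element $c$, $H_1(c;B)=(0:_B c)$, and the transition map $H_1(c^m;B)\to H_1(c^n;B)$ is multiplication by $c^{m-n}$. Note that $b^n=(a^n,0)$. We compute
\[
(0:_B b^n)=\mkset{(c,y)}{a^nc=0,\ a^ny=0}=0\oplus M,
\]
using that $a\in\Reg(A)$ and $aM=0$. Multiplication by $b$ sends $(0,y)$ to $(0,ay)=0$. Hence $m=n+1$ gives the zero map, and $b$ is weakly proregular.

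\emph{Properness.} We have $bB=\mkset{(ac,ay)}{c\in A,\ y\in M}=aA\oplus 0$. This is a proper ideal of $B$ because $a\notin A^\times$.

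\emph{Condition (3).} This is the step needing the most care. We have $\check{H}^1(b,B)=\coker(B\to B_b)$. Since $b$ annihilates $0\oplus M$, and $0\oplus M$ is an ideal of square zero, we get $B_b\cong A_a$, with $B\to B_b$ factoring through the projection $B\to A$. Hence $\check{H}^1(b,B)\cong A_a/A$, regarded as a $B$-module via $B\to A$; here we use that $A\to A_a$ is injective because $a$ is regular.

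The prime ideals of $B$ are exactly $P=\ideal{p}\oplus M$ with $\ideal{p}\in\spec A$, and $b\in P$ if and only if $a\in\ideal{p}$. For a $B$-module annihilated by $0\oplus M$, localizing at $P$ agrees with localizing the corresponding $A$-module at $\ideal{p}$. Therefore
\[
\check{H}^1(b,B)_P\cong (A_a/A)_{\ideal{p}}\cong (A_{\ideal{p}})_a/A_{\ideal{p}},
\]
and $A_{\ideal{p}}\to (A_{\ideal{p}})_a$ is still injective. If this quotient vanished, then $a$ would be a unit in $A_{\ideal{p}}$, contradicting $a\in\ideal{p}A_{\ideal{p}}$. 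So condition (3) holds.

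Thus $b$ is a strong parameter sequence of length one that is not a regular element of $B$, and hence $B$ is not HMCM. The only delicate points are the identification $B_b\cong A_a$ and the bookkeeping of localization through the surjection $B\to A$.
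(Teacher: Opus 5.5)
Your proposal is correct and follows essentially the same route as the paper: both take the element $(a,0)\in B$, verify weak proregularity from $\Ann_B((a,0)^n)=0\ast M$, identify $B_{(a,0)}\cong A_a$ so that $\check{H}^1((a,0),B)\cong A_a/A$, and show its localization at each prime $\mathfrak{p}\ast M$ with $a\in\mathfrak{p}$ is nonzero because $a$ is a regular nonunit in $A_{\mathfrak p}$. Your explicit computation $(a,0)B=aA\oplus 0$ for properness and your remark on localizing modules annihilated by $0\oplus M$ only spell out steps the paper treats more briefly.
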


\begin{proof}
  Let $\alpha=(a,0)\in B$.
  We show that $\alpha$ is a parameter element but is not a regular
  element.
  First, for every $0\neq x\in M$, we have
  $\alpha(0,x)=0$.
  Hence $\alpha$ is a zero-divisor on $B$.
  In particular, $\alpha B\neq B$.

  For a single element, weak proregularity is equivalent to the
  following condition:
  \[
    {}^\forall n\geq 1,
    {}^\exists m\geq n,
    \alpha^{m-n}\Ann_B(\alpha^m)=0.
  \]
  Since  $\Ann_B(\alpha^n)=0\ast M$ for every $n\geq 1$, and since $\alpha(0,x)=0$ for every $x\in M$,
  this condition holds.
  Thus $\alpha$ is weakly proregular.
  It remains to prove the non-vanishing of the \v{C}ech cohomology.
  Since $(0,x)/1=\alpha(0,x)/\alpha=0$ in $B_\alpha$ for every $x\in M$, localization at $\alpha$ gives
  an isomorphism $B_\alpha\cong A_a $. 
  Under this identification, the image of the natural map
  $B\longrightarrow B_\alpha$ is $A\subset A_a$.
  Therefore
  \[
    \check H^1(\alpha,B)\cong A_a/A.
  \]

  Let $\mathfrak q\in\spec B$ be a prime ideal containing $\alpha$.
  Then $\mathfrak q=P\ast M$ for some $P\in\spec A$, and $a\in P$.
  Hence

  \[
    \check H^1(\alpha,B)_{\mathfrak q}
    \cong
    (A_a)_P/A_P.
  \]

  We claim that this module is nonzero.
  Indeed, if $1/a=0$ in $(A_a)_P/A_P$, then $1/a\in A_P$.
  Since $a$ is a non-zero-divisor on $A_P$, this would force $a$
  to be a unit in $A_P$, contradicting $a\in PA_P$.
  Thus
  \[
    (A_a)_P/A_P\neq 0.
  \]

  Therefore $\alpha$ is a parameter element.
  However, $\alpha$ is a zero-divisor on $B$, so it is not a regular
  element.
  Hence $B$ is not HMCM.
\end{proof}

Applying Proposition~\ref{prop:idealization-obstruction}, we obtain the following general criterion for a polynomial extension of an idealization to fail to be HMCM.


\begin{thm}\label{thm:poly-ext-not-hmcm}
    Let $A$ be a ring, and let $I=(a_0,\dots,a_d)\subsetneq A$ be a finitely generated ideal such that $\Ann I=0$. Set $M=A/I$ and take the idealization $B=A\ast M$. Then $B[X]$ is not HMCM.
\end{thm}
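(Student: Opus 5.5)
We reduce to Proposition~\ref{prop:idealization-obstruction}. First observe that $B[X]\cong A[X]\ast M[X]$ as rings, since idealization commutes with the flat base change $A\to A[X]$: $(A\oplus M)\otimes_A A[X]=A[X]\oplus M[X]$, and the multiplication rule is preserved. Also $M[X]=A[X]/IA[X]$, which is nonzero because $I\neq A$. So it suffices to find an element $f\in A[X]$ with $f\in\Reg(A[X])$, $f\notin A[X]^\times$, and $fM[X]=0$, that is, $f\in IA[X]$.

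The natural choice is the McCoy-type polynomial
\[
f\coloneq a_0+a_1X+\cdots+a_dX^d\in IA[X].
\]
Then $fM[X]=0$ holds automatically, since every coefficient lies in $I$. For regularity we apply Lemma~\ref{lem:McCoy} with the module $A$ itself: $f$ is a zero-divisor on $A[X]$ exactly when some nonzero $x\in A$ is killed by all $a_i$, that is, when $x\in\Ann I$. Since $\Ann I=0$ by hypothesis, $f\in\Reg(A[X])$.

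It remains to show that $f$ is not a unit of $A[X]$. Units of $A[X]$ have the form $u_0+u_1X+\cdots$ with $u_0\in A^\times$ and higher coefficients nilpotent. If $f$ were a unit, then $a_0$ would be a unit, hence $I=A$, contradicting $I\subsetneq A$. Alternatively, $f$ is not a unit because its image in $(A/I)[X]=M[X]\neq 0$ is $0$. With these facts, Proposition~\ref{prop:idealization-obstruction} applied to the ring $A[X]$, the module $M[X]$, and the element $f$ shows that $A[X]\ast M[X]\cong B[X]$ is not HMCM.

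The only step requiring care is the identification $B[X]\cong A[X]\ast M[X]$, and in particular that the hypotheses of Proposition~\ref{prop:idealization-obstruction} are checked over $A[X]$ rather than over $A$. This is exactly the point where the polynomial variable helps: over $A$ itself, $I$ need not contain a regular element, but Lemma~\ref{lem:McCoy} produces one in $IA[X]$. This mirrors Lemma~\ref{lem:Nor thm7}, which says $\grade_{IA[X]}A[X]>0$ when $\Ann I=0$. I expect no real obstacle beyond verifying these identifications.
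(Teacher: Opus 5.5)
Your proposal is correct and is essentially the paper's proof: take $f=a_0+a_1X+\cdots+a_dX^d$, use McCoy's theorem with $\Ann I=0$ to get $f\in\Reg(A[X])$, use $a_0\in I\subsetneq A$ to see $f\notin A[X]^\times$, identify $B[X]\cong A[X]\ast M[X]$, and apply Proposition~\ref{prop:idealization-obstruction}. You also check $M[X]\neq 0$ explicitly, which the paper leaves implicit.
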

\begin{proof}
    Let $f=a_0+a_1X+\dots+a_dX^d\in A[X]$. By McCoy's theorem (Lemma~\ref{lem:McCoy}) and the assumption $\Ann I=0$, we have $f\in\Reg(A[X])$. Moreover, since $a_0\in I\subsetneq A$, the constant term $a_0$ is not a unit, and hence $f\notin A[X]^\times$. 

    We have an isomorphism $B[X]\cong A[X]\ast M[X]$. Indeed, it is given by sending $\sum (a_i,x_i)X^i$ to $(\sum a_iX^i,\sum x_iX^i)$. By construction, we have $fM[X]=0$. Therefore, by Proposition \ref{prop:idealization-obstruction}, the ring $B[X]$ is not HMCM.
\end{proof}


To apply this theorem, we construct a ring $A$ admitting a finitely generated ideal $I$ with $\Ann I=0$ such that $A\ast A/I$ is HMCM.

\begin{ex}\label{ex:Ann I = 0 and B is HMCM}
    Let $k$ be a field, and let $D=k[s,t]_{(s,t)}$. Let $\ideal{m}=(s,t)$ be the unique maximal ideal of $D$.
    Set
    \[
        M=\bigoplus_{0\neq g\in\ideal{m}} D/gD.
    \]
    Consider the idealization $A=D\ast M$ and the ideal $I=((s,0),(t,0))A$ of $A$. Then $\Ann I=0$, and $B=A\ast A/I$ has no parameter elements. In particular, $B$ is HMCM.
\end{ex}

\begin{proof}
    First note that, for any $a,b\in D$ and $u,v\in M$, we have
    \[
        (s,0)(a,u)+(t,0)(b,v)=(sa+tb,su+tv).
    \]
    Hence $I=\ideal{m}\ast \ideal{m}M$.

    We next show that $\Ann I=0$. Let $(a,u)\in \Ann I$. Since $(s,0),(t,0)\in I$, we have $as=at=0$. Since $D$ is a domain, it follows that $a=0$. Looking at the second component, we then have $su=tu=0$. Write $u=(\overline{u_g})\in M=\bigoplus_{0\neq g\in\ideal{m}}D/gD$. Then, for each $0\neq g\in\ideal{m}$, we have $su_g\in gD$ and $tu_g\in gD$. Since $D$ is a UFD and $s,t$ are relatively prime, this implies $u_g\in gD$. Hence $u=0$, and therefore $\Ann I=0$.

    We show that $B$ has no parameter elements. Let $\alpha\in B$ be arbitrary, and write
    \[
        \alpha=(a,x)=((d,u),x)\in (D\ast M)\ast A/I.
    \]

    Case 1. Suppose that $d\in D^\times$. Then $a=(d,u)\in A^\times$. Indeed,
    \[
        (d,u)(d^{-1},-d^{-2}u)=(1,0).
    \]
    Hence $\alpha\in B^\times$ as well, and so $\alpha$ is not a parameter element.

    Case 2. Suppose that $d=0$. Then $a=(0,u)$ satisfies $a^2=0$. Hence
    \[
        \alpha^2=(0,2ax)
        \quad\text{and}\quad
        \alpha^3=0.
    \]
    Thus $\alpha$ is nilpotent. Therefore $B_\alpha=0$, and
    \[
        \check{H}^1(\alpha,B)=\operatorname{Coker}(B\to B_\alpha)=0.
    \]
    Hence $\alpha$ is not a parameter element.

    Case 3. Suppose that $d\neq0$ and $d\notin D^\times$. We show that $\alpha$ is not weakly proregular. For a single element, weak proregularity is equivalent to the following condition: for every $n\geq1$, there exists $m\geq n$ such that
    \[
        \alpha^{m-n}\Ann(\alpha^m)=0.
    \]
    We show that this condition fails for $n=2$. Since $D$ is local, $d\in\ideal{m}$. For each $m\geq2$, define $u_m=(\overline{u_{m,g}})\in M$ by setting $u_{m,g}=1$ if $g=d^m$, and $u_{m,g}=0$ if $g\neq d^m$. Put
    \[
        \beta_m=((0,u_m),0)\in B.
    \]
    Write $\alpha=(a,x)$, where $a=(d,u)$. Then
    \[
        \alpha^m=(a^m,ma^{m-1}x),
    \]
    and hence
    \[
        \alpha^m\beta_m
        =
        \bigl(a^m(0,u_m),\,ma^{m-1}(0,u_m)x\bigr).
    \]
    The first component is
    \[
        a^m(0,u_m)=(0,d^m u_m)=0.
    \]
    To see that the second component is also zero, write $x=\overline{(c,v)}\in A/I$ with $(c,v)\in A$. Then
    \[
        ma^{m-1}(0,u_m)x
        =
        m(d^{m-1},(m-1)d^{m-2}u)(0,u_m)x
        =
        m\overline{(0,d^{m-1}cu_m)}.
    \]
    Since $m\geq2$ and $d\in\ideal{m}$, we have $d^{m-1}cu_m\in \ideal{m}M$. Thus
    \[
        (0,d^{m-1}cu_m)\in 0\ast \ideal{m}M\subset \ideal{m}\ast \ideal{m}M=I,
    \]
    so the second component is zero. Hence $\beta_m\in\Ann(\alpha^m)$.

    On the other hand, the first component of $\alpha^{m-2}\beta_m$ is
    \[
        a^{m-2}(0,u_m)=(0,d^{m-2}u_m).
    \]
    This is nonzero. Indeed, if $d^{m-2}u_m=0$, then looking at the component indexed by $d^m$ gives $d^{m-2}\in d^mD$, which is impossible since $D$ is a domain and $d$ is not a unit. Therefore
    $\alpha^{m-2}\beta_m\neq0.$
    Thus $\alpha^{m-2}\Ann(\alpha^m)\neq0$ for every $m\geq2$, and so $\alpha$ is not weakly proregular.

    In all cases, $\alpha$ is not a parameter element. Therefore $B$ has no parameter elements, and hence no nonempty strong parameter sequences. Consequently, $B$ is HMCM.
\end{proof}

\begin{cor}\label{cor:A is HMCM but A[X] is not so}
    Let $k$ be a field, and let $D=k[s,t]_{(s,t)}$. Let $\ideal{m}=(s,t)$ be the unique maximal ideal of $D$.
    Set
    \[
        M=\bigoplus_{0\neq g\in\ideal{m}} D/gD.
    \]
    Let $A=D\ast M$, let $I=((s,0),(t,0))A$ be an ideal of $A$, and consider the idealization $B=A\ast A/I$. Then $B$ is HMCM, but $B[X]$ is not HMCM.
\end{cor}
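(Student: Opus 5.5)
This corollary combines Example~\ref{ex:Ann I = 0 and B is HMCM} with Theorem~\ref{thm:poly-ext-not-hmcm}, so the plan is to check that the hypotheses of each are met by the data in the statement.

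First, the HMCM property of $B=A\ast A/I$ is exactly the conclusion of Example~\ref{ex:Ann I = 0 and B is HMCM}. There it is shown that every $\alpha\in B$ fails to be a parameter element, by splitting on the $D$-component $d$ of $\alpha$: if $d$ is a unit, then $\alpha$ is a unit; if $d=0$, then $\alpha$ is nilpotent, so $\check H^1(\alpha,B)=0$; otherwise $\alpha$ fails weak proregularity, witnessed by the elements $\beta_m$. Consequently $B$ has no nonempty strong parameter sequences, so the defining condition of HMCM holds vacuously.

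Second, to see that $B[X]$ is not HMCM, I would apply Theorem~\ref{thm:poly-ext-not-hmcm} to the ring $A=D\ast M$ and the ideal $I=((s,0),(t,0))A$, taking $d=1$, $a_0=(s,0)$ and $a_1=(t,0)$. The theorem needs three things.
\begin{enumerate}
\item $I$ is finitely generated. This holds by construction.
\item $\Ann I=0$. This is proved in Example~\ref{ex:Ann I = 0 and B is HMCM}, using that $s,t$ are coprime in the UFD $D$.
\item $I\subsetneq A$. The computation $I=\ideal{m}\ast\ideal{m}M$ from the example shows that $(1,0)\notin I$, since $1\notin\ideal{m}$.
\end{enumerate}
The theorem then gives that $B[X]=(A\ast A/I)[X]$ is not HMCM. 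Concretely, $f=(s,0)+(t,0)X$ is a regular non-unit of $A[X]$ that kills $(A/I)[X]$, so Proposition~\ref{prop:idealization-obstruction} applies.

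There is no real obstacle here, since all the work was done in the example and the theorem. The only point needing care is properness of $I$, which the theorem assumes and the example does not state explicitly; it follows from the description $I=\ideal{m}\ast\ideal{m}M$.
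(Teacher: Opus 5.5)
Your proposal is correct and follows the paper's own proof: it takes $\Ann I=0$ and the HMCM property of $B$ from Example~\ref{ex:Ann I = 0 and B is HMCM}, then applies Theorem~\ref{thm:poly-ext-not-hmcm}. Your explicit check that $I\subsetneq A$, using $I=\ideal{m}\ast\ideal{m}M$, fills in a hypothesis of the theorem that the paper's one-line proof leaves implicit.
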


\begin{proof}
    By Example~\ref{ex:Ann I = 0 and B is HMCM}, we have $\Ann I=0$, and $B$ is HMCM. Hence, by Theorem~\ref{thm:poly-ext-not-hmcm}, the polynomial ring $B[X]$ is not HMCM.
\end{proof}

\begin{rem}
    We now recall a related result on the HMCM property of idealizations. By \cite{Mahdikhani-Sahandi-Shirmohammadi2018}*{Theorem~4.11} and the note immediately following it, if $A$ is an HMCM ring and every $A$-regular sequence is a weakly $M$-regular sequence for an $A$-module $M$, then the idealization $A\ast M$ is HMCM. In Example~\ref{ex:Ann I = 0 and B is HMCM}, one can show that $A=D\ast M$ has no parameter elements, and hence $A$ is HMCM. Moreover, $A$ has no nonunit regular elements, so the latter condition in \cite{Mahdikhani-Sahandi-Shirmohammadi2018}*{Theorem~4.11} is vacuously satisfied. Thus one can also prove from this result that $B=A\ast A/I$ is HMCM. In Example~\ref{ex:Ann I = 0 and B is HMCM}, we prove the stronger statement that $B$ has no parameter elements.

\end{rem}

Thus we have shown that, even if $A$ is HMCM, the polynomial ring $A[X]$ need not be HMCM. We next give a slight generalization of \cite{Kim-Walker2020}*{Theorem~25}. We show that, if $A$ is stably coherent and has finite weak global dimension, then every polynomial ring $A[X_1,\dots,X_n]$ in finitely many variables is HMCM.

In what follows, $\operatorname{fd}_A M$ and $\operatorname{pd}_A M$ denote the flat dimension and the projective dimension of an $A$-module $M$, respectively. We define the weak global dimension of $A$ by
\[
    \wgldim A
    =
    \sup\mkset{\operatorname{fd}_A M}{ M\text{ is an }A\text{-module}}.
\]


\begin{lem}\label{lem:coherent-fd-pd}
    Let $A$ be a coherent ring, and let $M$ be a finitely presented $A$-module.
    If $\operatorname{fd}_A M<\infty$, then $\operatorname{pd}_A M<\infty$.
\end{lem}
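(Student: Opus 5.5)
The plan is to build a resolution of $M$ by finitely generated free modules, which coherence makes possible, and then show that a sufficiently high syzygy is flat and finitely presented, hence projective. Set $n=\operatorname{fd}_A M<\infty$.

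\emph{The resolution.} Since $A$ is coherent and $M$ is finitely presented, every finitely generated submodule of a finitely presented module is again finitely presented. Hence the kernel of any surjection $A^{m}\to M$ is finitely presented, and we can iterate. This produces an exact complex
\[
\cdots\to F_1\to F_0\to M\to 0
\]
with each $F_i$ finitely generated free. Let $K_n=\ker(F_{n-1}\to F_{n-2})$ be the $n$-th syzygy, with the conventions $F_{-1}=M$ and $K_0=M$. By the construction just described, $K_n$ is finitely presented.

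\emph{$K_n$ is flat.} Dimension shifting along the truncated resolution $0\to K_n\to F_{n-1}\to\cdots\to F_0\to M\to 0$ gives
\[
\Tor_1^A(K_n,N)\cong\Tor_{n+1}^A(M,N)=0
\]
for every $A$-module $N$, because $\operatorname{fd}_A M=n$. So $K_n$ is flat.

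\emph{$K_n$ is projective.} A finitely presented flat module is projective. This is the standard result (Lazard, or via $\operatorname{Hom}$ commuting with direct limits, which gives that $\operatorname{Hom}(K_n,-)$ preserves surjections). Therefore
\[
0\to K_n\to F_{n-1}\to\cdots\to F_0\to M\to 0
\]
is a finite projective resolution, and $\operatorname{pd}_A M\le n$.

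The only point needing care is the first step, that every syzygy stays finitely presented. This is precisely where coherence is used: in a coherent ring, the kernel of a map between finitely presented modules is finitely presented. The remaining steps are formal.
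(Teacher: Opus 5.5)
Your proof is correct and is essentially the paper's argument: the paper inducts on $d=\operatorname{fd}_A M$, splitting off one syzygy $0\to K\to F\to M\to 0$ at a time, while you pass directly to the $n$-th syzygy. Both rest on coherence keeping the syzygies finitely presented and on finitely presented flat modules being projective. Your unrolled version has the small bonus of recording the explicit bound $\operatorname{pd}_A M\le\operatorname{fd}_A M$.
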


\begin{proof}
    We prove the assertion by induction on $d=\operatorname{fd}_A M$.
    If $d=0$, then $M$ is finitely presented and flat, hence projective.

    Suppose that $d>0$. Since $M$ is finitely presented, there exists an exact sequence
    \[
        0\to K\to F\to M\to 0
    \]
    with $F$ a finite free $A$-module. Since $A$ is coherent, the module $K$ is finitely presented. Moreover, $\operatorname{fd}_A K\leq d-1$. Hence, by the induction hypothesis, $\operatorname{pd}_A K<\infty$. Therefore $\operatorname{pd}_A M<\infty$.
\end{proof}

\begin{prop}\label{prop:coherent-finite-wdim-HMCM}
    Let $A$ be a coherent ring, and assume that $\wgldim A<\infty$.
    Then $A$ is a coherent regular ring.
    In particular, $A$ is locally HMCM, and hence $A$ is HMCM.
\end{prop}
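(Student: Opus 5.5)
Regularity comes from Lemma~\ref{lem:coherent-fd-pd}, and the HMCM conclusions from earlier results on coherent regular rings, localization and Hamilton--Marley's local-to-global criterion.

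\textbf{Step 1: regularity.} Let $J$ be a finitely generated ideal of $A$. Since $A$ is coherent, $J$ is finitely presented. Since $\wgldim A=d<\infty$, we have $\operatorname{fd}_A J\le d$. Lemma~\ref{lem:coherent-fd-pd} then gives $\operatorname{pd}_A J<\infty$. So every finitely generated ideal has finite projective dimension, and $A$ is regular in Bertin's sense. Together with coherence, $A$ is a coherent regular ring.

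\textbf{Step 2: localization.} I would check that both hypotheses pass to $A_P$ for every $P\in\spec A$.
\begin{enumerate}
\item Coherence localizes. A finitely generated ideal of $A_P$ has the form $J_P$ with $J$ finitely generated in $A$. Because $J$ is finitely presented, so is $J_P$.
\item Weak global dimension does not increase under localization. Every $A_P$-module $N$ satisfies $\operatorname{fd}_{A_P}N\le \operatorname{fd}_A N$, which one sees by localizing a flat resolution over $A$.
\end{enumerate}
Hence $A_P$ is again coherent with finite weak global dimension. By Step 1, $A_P$ is a coherent regular (local) ring.

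\textbf{Step 3: HMCM conclusions.} I would invoke the Hamilton--Marley theorem that coherent regular rings are HMCM; this is property (2) recalled in Section~\ref{sec:background}, i.e.\ \cite{Hamilton-Marley2007}. Applied to each $A_P$, it shows that $A$ is locally HMCM. Then \cite{Hamilton-Marley2007}*{Proposition~4.7} gives that $A$ is HMCM. Alternatively, Step 1 applied to $A$ itself gives this directly.

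The main obstacle is not the argument above but pinning down the precise form of the cited input. One needs the statement that coherent regular rings are HMCM in the form Hamilton--Marley prove it. If that version is stated only for local rings, the route through Steps 2 and 3 is necessary, and checking that coherence localizes becomes the key verification.
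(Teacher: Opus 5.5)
Your proposal is correct and follows the paper's proof. Step~1 is exactly the paper's argument via Lemma~\ref{lem:coherent-fd-pd}, and the paper then cites \cite{Hamilton-Marley2007}*{Theorem~4.8} (coherent regular rings are locally HMCM) and \cite{Hamilton-Marley2007}*{Proposition~4.7}; your Step~2 check that coherence and finite weak global dimension pass to $A_P$ is valid, but that cited theorem is already stated in its ``locally'' form, so the check is not needed.
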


\begin{proof}
    Let $I\subset A$ be a finitely generated ideal.
    Since $A$ is coherent, $I$ is a finitely presented $A$-module.
    Moreover, since $\wgldim A<\infty$, we have $\operatorname{fd}_A I<\infty$.
    Hence Lemma~\ref{lem:coherent-fd-pd} implies that $\operatorname{pd}_A I<\infty$.

    Thus every finitely generated ideal of $A$ has finite projective dimension.
    In other words, $A$ is a coherent regular ring.
    By Hamilton--Marley's theorem \cite{Hamilton-Marley2007}*{Theorem~4.8}, every coherent regular ring is locally HMCM.
    Moreover, by \cite{Hamilton-Marley2007}*{Proposition~4.7}, if the localizations at all maximal ideals are HMCM, then the ring itself is HMCM.
    Therefore $A$ is HMCM.
\end{proof}

\begin{thm}\label{thm:stably-coherent-finite-wdim-poly-HMCM}
    Let $A$ be a stably coherent ring and assume that $\wgldim A<\infty$.
    Then, for every $n\geq0$, the polynomial ring $A[X_1,\dots,X_n]$ is locally HMCM.
    In particular, $A[X_1,\dots,X_n]$ is HMCM.
\end{thm}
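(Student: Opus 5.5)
The plan is to reduce the statement to Proposition~\ref{prop:coherent-finite-wdim-HMCM} applied to the ring $R=A[X_1,\dots,X_n]$. That proposition needs two hypotheses on $R$.

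First, $R$ must be coherent. This holds by the definition of stable coherence, since $A[X_1,\dots,X_n]$ is coherent for every $n\geq 0$; the case $n=0$ is the coherence of $A$ itself.

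Second, $\wgldim R<\infty$. Here I would prove the standard bound
\[
\wgldim A[X_1,\dots,X_n]\leq \wgldim A+n,
\]
by induction on $n$, so it suffices to show $\wgldim A[X]\leq \wgldim A+1$. Let $N$ be an $A[X]$-module and set $d=\wgldim A$. There is the standard exact sequence of $A[X]$-modules
\[
0\to A[X]\otimes_A N\xrightarrow{\ \phi\ } A[X]\otimes_A N\to N\to 0,
\qquad
\phi(f\otimes y)=fX\otimes y-f\otimes Xy.
\]
Since $A[X]$ is flat (indeed free) over $A$, extension of scalars $A[X]\otimes_A-$ carries a flat resolution of $N$ over $A$ of length at most $d$ to a flat resolution over $A[X]$. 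Hence $\operatorname{fd}_{A[X]}(A[X]\otimes_A N)\leq \operatorname{fd}_A N\leq d$. The short exact sequence then gives $\operatorname{fd}_{A[X]}N\leq d+1$.

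To derive that last bound, I would use $\Tor^{A[X]}$ and the long exact sequence: for every $i\geq d+2$ and every $A[X]$-module $L$, the group $\Tor_i^{A[X]}(N,L)$ is squeezed between $\Tor_i^{A[X]}(A[X]\otimes_A N,L)=0$ and $\Tor_{i-1}^{A[X]}(A[X]\otimes_A N,L)=0$, so it vanishes. Flat-base-change compatibility of $\Tor$ gives $\Tor_i^{A[X]}(A[X]\otimes_A N,L)\cong\Tor_i^A(N,L)$, which justifies the flat-dimension bound used above.

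With coherence and finite weak global dimension in hand, Proposition~\ref{prop:coherent-finite-wdim-HMCM} shows that $R$ is a coherent regular ring. By \cite{Hamilton-Marley2007}*{Theorem~4.8} it is locally HMCM, and hence HMCM by \cite{Hamilton-Marley2007}*{Proposition~4.7}.

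The only step needing care is exactness of the sequence $0\to A[X]\otimes_A N\to A[X]\otimes_A N\to N\to 0$, in particular injectivity of $\phi$. I would check it by writing elements as $\sum X^i\otimes y_i$ and comparing leading terms in $X$: the top-degree component of $\phi\bigl(\sum_{i\leq m} X^i\otimes y_i\bigr)$ is $X^{m+1}\otimes y_m$, so $\phi$ of a nonzero element is nonzero. Surjectivity onto $N$ and exactness in the middle follow by the usual telescoping argument, using $X^i\otimes y\equiv 1\otimes X^iy$ modulo the image of $\phi$. Everything else is formal.
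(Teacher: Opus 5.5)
Your proposal is correct and follows the paper's proof: coherence of $A[X_1,\dots,X_n]$ from stable coherence, finiteness of its weak global dimension, and then Proposition~\ref{prop:coherent-finite-wdim-HMCM}. The only difference is that the paper cites Jensen's Hilbert syzygy theorem for the equality $\wgldim A[X_1,\dots,X_n]=\wgldim A+n$, whereas you prove directly the upper bound $\wgldim A[X]\leq\wgldim A+1$, which is all that is needed, via the exact sequence $0\to A[X]\otimes_A N\to A[X]\otimes_A N\to N\to 0$; your checks of that sequence and of the $\Tor$ vanishing are sound.
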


\begin{proof}
    Set $B=A[X_1,\dots,X_n]$.
    Since $A$ is stably coherent, the ring $B$ is coherent.
    Moreover, by the Hilbert syzygy theorem for weak global dimension \cite{Jensen1966}*{Theorem~2}, we have $ \wgldim B = \wgldim A+n < \infty. $
    Applying Proposition~\ref{prop:coherent-finite-wdim-HMCM} to $B$, we conclude that $B$ is locally HMCM, and hence HMCM.
\end{proof}

\begin{defi}
    Let $A$ be a ring. $A$ is called hereditary if every ideal of $A$ is projective. Also, $A$ is called \textbf{semi-hereditary} if every finitely generated ideal of $A$ is projective.
\end{defi} 

\begin{cor}\label{cor:semihereditary-polynomial-HMCM}
    Let $A$ be a semi-hereditary ring.
    Then, for every $n\geq0$, the polynomial ring
    $A[X_1,\dots,X_n]$ is locally HMCM.
    In particular, $A[X_1,\dots,X_n]$ is HMCM.
\end{cor}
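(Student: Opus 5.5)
The plan is to reduce Corollary~\ref{cor:semihereditary-polynomial-HMCM} to Theorem~\ref{thm:stably-coherent-finite-wdim-poly-HMCM}. For this we must check two hypotheses on a semi-hereditary ring $A$: that $A$ is stably coherent, and that $\wgldim A<\infty$. Once both hold, Theorem~\ref{thm:stably-coherent-finite-wdim-poly-HMCM} shows directly that $A[X_1,\dots,X_n]$ is locally HMCM, and hence HMCM, for every $n\geq 0$.

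\textbf{Finite weak global dimension.} We show $\wgldim A\leq 1$. Let $I$ be any ideal. Then $I$ is the filtered union (direct limit) of its finitely generated subideals. By hypothesis each of these is projective, hence flat. A direct limit of flat modules is flat, so $I$ is flat. From $0\to I\to A\to A/I\to 0$ we get $\operatorname{fd}_A(A/I)\leq 1$ for every ideal $I$. It is standard that the weak global dimension is detected on cyclic modules $A/I$, via $\Tor$ and the fact that $\Tor$ commutes with direct limits. Hence $\wgldim A\leq 1$.

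\textbf{Coherence of $A$.} Let $I$ be a finitely generated ideal. It is projective, so the kernel of any surjection $A^m\to I$ is a direct summand of $A^m$. That kernel is therefore finitely generated, so $I$ is finitely presented and $A$ is coherent.

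\textbf{Stable coherence (the main obstacle).} Here the argument is not formal: coherence of $A$ does not pass to $A[X]$ in general, as the Soublin examples recalled earlier show. I would not try to prove this by hand. Instead I would cite the known theorem that a ring that is coherent with $\wgldim\leq 1$ is stably coherent; equivalently, semi-hereditary rings (rings of weak global dimension at most one that are coherent, e.g.\ Prüfer domains) are stably coherent. See \cite{Glaz2000}, where this is attributed to Vasconcelos and Greenberg.

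The standard proof of that theorem goes as follows. Localize at a prime $P$: $A_P$ has weak global dimension $\leq 1$ and is local, hence a valuation domain. Polynomial rings over valuation domains are coherent by a classical theorem. Coherence then glues, because a finitely generated ideal of $A[X_1,\dots,X_n]$ is finitely presented once it is so locally over $\spec A$, with finite presentation descending through the reduced-ring structure. If only a reference is wanted, citing Glaz for ``semi-hereditary $\Rightarrow$ stably coherent'' suffices, and the rest of the proof is the reduction described in the first paragraph.
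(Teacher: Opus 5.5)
Your proposal is correct and follows the paper's proof: you check that a semi-hereditary ring is stably coherent (by citation; the paper uses \cite{Glaz1989}*{Corollary~7.3.4}) and has $\wgldim A\leq 1$, and then you apply Theorem~\ref{thm:stably-coherent-finite-wdim-poly-HMCM}. One caution concerns your optional sketch of stable coherence: finite presentation does not glue from localizations at primes, only over finite Zariski covers (for instance, over an infinite Boolean ring, $A/J$ with $J$ not finitely generated is locally $0$ or $A_P$ at every prime, yet it is not finitely presented). So that gluing step is not a valid argument as written, and you should rely on the citation, as you in fact propose.
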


\begin{proof}
    Since every semi-hereditary ring is stably coherent \cite{Glaz1989}*{Corollary~7.3.4} and satisfies $\wgldim A\leq1$, the assertion follows from Theorem~\ref{thm:stably-coherent-finite-wdim-poly-HMCM}.
\end{proof}

We finally note that the ring $B$ in Example~\ref{ex:Ann I = 0 and B is HMCM} is not coherent. Thus the following problem remains open.
\begin{problem}
    Let $A$ be a coherent Cohen--Macaulay ring in the sense of Hamilton--Marley.
    Is $A[X]$ Cohen--Macaulay in the sense of Hamilton--Marley?
\end{problem}

\section{Localization of HMCM rings}\label{sec:hmcm-localization}




We next turn to localization. We construct an example of a ring that is HMCM globally but whose localization admits an element playing the role of $a$ in Proposition~\ref{prop:idealization-obstruction}. This yields a counterexample showing that the HMCM property is not preserved under localization in general; see Corollary~\ref{cor:hmcm-not-local}.

\begin{ex}\label{ex:hmcm-ring-localizes-to-bad-idealization}
    Let $k$ be an infinite field and set
    \[
        D=k[s,t]_{(s,t)}, \ideal{m}=(s,t), P=sD.
    \]
    Then there exist a $D$-module $M$, an ideal $I\subset A=D\ast M$,
    a ring $B=A\ast A/I$, and a prime ideal $\mathfrak q\in \spec B$
    such that $B$ is HMCM and
    \[
        B_{\mathfrak q}\cong D_P\ast D_P/sD_P.
    \]
\end{ex}

\begin{proof}
    For each $0\neq d\in\ideal{m}$ and $n\geq 2$, we construct a
    $D$-module $N_{d,n}$ and an element $u_{d,n}\in N_{d,n}$
    such that
    \[
        d^n u_{d,n}=0,
        d^{n-1}u_{d,n}\in sN_{d,n},
        d^{n-2}u_{d,n}\neq 0,
        (N_{d,n})_P=0.
    \]

    Case 1. First suppose that $d\notin P$. Write $D^2=De_1\oplus De_2$ with
    basis $e_1,e_2$, and define
    \[
        N_{d,n}
        = D^2/(d^n e_1, d^{n-1}e_1-se_2, de_2)D^2.
    \]
    Put $u_{d,n}=\overline{e_1}$. Then the defining relations give $d^n u_{d,n}=0$ and $d^{n-1}u_{d,n}\in sN_{d,n}$.
    We show that $d^{n-2}u_{d,n}\neq 0$. Consider the $D$-linear map
    \[
        D^2\rightarrow D/d^{n-1}D
        ;
        e_1\mapsto 1, e_2\mapsto 0.
    \]
    It annihilates the defining relations of $N_{d,n}$, and hence induces a
    $D$-linear map $N_{d,n}\rightarrow D/d^{n-1}D$.
    The image of $d^{n-2}u_{d,n}$ is the residue class of $d^{n-2}$, which is
    nonzero since $D$ is a domain and $d$ is a nonunit. Thus $d^{n-2}u_{d,n}\neq 0$. Finally, since $d\notin P$, the element $d$ becomes a unit in $D_P$.
    The localized relations $d^n e_1=0$ and $de_2=0$ therefore imply
    $e_1=e_2=0$. Hence $(N_{d,n})_P=0$.
    
    Case 2. Suppose that $d\in P=sD$. Then $d=sf$ for some $f\in D$. 
    Since $D$ is a UFD and $k$ is infinite, we can choose $\lambda\in k$ so
    that $t-\lambda s$ does not divide $d$; indeed, it suffices to factor
    $d$ and avoid the factors appearing in its factorization. Put $r_{d,n}=t-\lambda s$.
    Then $\bar{D}:=D/r_{d,n}D = k[s]_{(s)}$.
    Suppose that $d^{n-2}\in (d^n,r_{d,n})$.
    Then $\overline{d^{n-2}}\in \overline{d^n}\,\overline D$.
    Since $\overline D$ is a domain, it follows that $\overline d$ is a unit.
    On the other hand, since $d\in P$, the image $\overline d$ is not a unit in $\bar{D} = k[s]_{(s)}$.
    This is a contradiction. Therefore $d^{n-2}\notin (d^n,r_{d,n})$. In this case, define
    \[
        N_{d,n}=D/(d^n,r_{d,n})D
    \]
    and put $u_{d,n}=\overline{1}$. Then $d^n u_{d,n}=0$.
    Moreover, since $d=sf$, we have
    \[
        d^{n-1}u_{d,n} = sfd^{n-2}u_{d,n} \in sN_{d,n}.
    \]
    By the choice of $r_{d,n}, d^{n-2}u_{d,n}\neq 0$.
    Finally, since $r_{d,n}\notin P$, the module $N_{d,n}$ vanishes after
    localizing at $P$. Therefore $(N_{d,n})_P=0$.

    Now set
    \[
        M=\bigoplus_{\substack{0\neq d\in\ideal{m}\\ n\geq 2}}
        N_{d,n}.
    \]
    Then $M_P=0$. Let $A=D\ast M$, put $I=(s,0)A$ = $sD\ast sM$, and set
    $B=A\ast A/I$.

    We show that $B$ is HMCM. In fact, $B$ has no parameter element.
    Let $\alpha\in B$, and write
    \[
        \alpha = (a,x) = ((d,u),x) \in (D\ast M)\ast A/I.
    \]

    If $d \in D^\times$ or $d=0$, then, as in Example \ref{ex:Ann I = 0 and B is HMCM}, we have respectively $\alpha\in B^\times$ and $\check{H}^1_{\alpha}(B)=0$. Hence $\alpha$ is not a parameter element.
    
    It remains to consider the case $0\neq d\in\ideal{m}$. We show that
    $\alpha$ is not weakly proregular. 
    Fix $n\geq 2$, and take $u_{d,n}\in N_{d,n}\subset M$ as
    above. Put
    \[
        b_n=(0,u_{d,n})\in A,
        \beta_n=(b_n,0)\in B.
    \]
    Then $a^m b_n=(0,d^m u_{d,n})$
    for all $m\geq 0$. Hence $a^n b_n=0$, and
    \[
        a^{n-1}b_n
        =
        (0,d^{n-1}u_{d,n})
        \in sD\ast sM
        =
        I.
    \]
    Since $\alpha=(a,x)$, we get
    \[
        \alpha^n\beta_n
        =
        (a^n,n a^{n-1}x)(b_n,0)
        =
        (a^n b_n,n a^{n-1}b_nx)
        =
        0.
    \]
    Thus $\beta_n\in\Ann_B(\alpha^n)$. On the other hand, the first
    component of $\alpha^{n-2}\beta_n$ is $a^{n-2}b_n=(0,d^{n-2}u_{d,n})$,
    which is nonzero by construction. Therefore $\alpha^{n-2}\Ann_B(\alpha^n)\neq 0$ for every $n\geq 2$.
    Hence, $\alpha$ is not weakly proregular. 

    In all cases, $\alpha$ is not a parameter element. Therefore $B$ has no parameter element. In particular, $B$ has no
    nonempty strong parameter sequence, so $B$ is HMCM.

    Finally, let $\mathfrak p=P\ast M\in\spec A$ and
    $\mathfrak q=\mathfrak p\ast A/I\in\spec B$. Since $M_P=0$,
    \[
        A_{\mathfrak p}\cong D_P\ast M_P\cong D_P.
    \]
    Moreover, $I_{\mathfrak p}=(s,0)A_{\mathfrak p}$ corresponds to
    $sD_P$, and hence
    \[
        (A/I)_{\mathfrak p}\cong A_{\mathfrak p}/I_{\mathfrak p}
        \cong D_P/sD_P.
    \]
    Therefore
    \[
        B_{\mathfrak q}
        \cong A_{\mathfrak p}\ast (A/I)_{\mathfrak p}
        \cong D_P\ast D_P/sD_P.
    \]
\end{proof}

\begin{cor}\label{cor:hmcm-not-local}
    Let $k$ be an infinite field. Then there exist a commutative ring
    $B$ and a prime ideal $\mathfrak q\in \spec B$ such that $B$ is
    HMCM, but $B_{\mathfrak q}$ is not HMCM.
\end{cor}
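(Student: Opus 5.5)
We take $B$ and $\mathfrak q$ to be the ring and prime constructed in Example~\ref{ex:hmcm-ring-localizes-to-bad-idealization}, with $D=k[s,t]_{(s,t)}$ over the given infinite field $k$ and $P=sD$. That example already shows that $B$ is HMCM, since it has no parameter elements, and that
\[
B_{\mathfrak q}\cong D_P\ast D_P/sD_P .
\]
It therefore remains to show that the idealization $D_P\ast D_P/sD_P$ is not HMCM. The HMCM property is clearly invariant under ring isomorphism, so this suffices.

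For this we apply Proposition~\ref{prop:idealization-obstruction} with base ring $D_P$, module $D_P/sD_P$ and element $a=s/1\in D_P$. We check its hypotheses in turn.
\begin{enumerate}
\item The module $D_P/sD_P$ is nonzero. Indeed, $P=sD$ is a prime ideal (since $s$ is a prime element of the UFD $D$), so $sD_P=PD_P$ is the maximal ideal of the local ring $D_P$ and is proper.
\item The element $s$ lies in $\Reg(D_P)$, because $D_P$ is a domain and $s\neq 0$.
\item The element $s$ is not a unit of $D_P$, because $s\in PD_P$.
\item The element $s$ annihilates $D_P/sD_P$, which is immediate.
\end{enumerate}
Proposition~\ref{prop:idealization-obstruction} then shows that $(s,0)$ is a parameter element of $D_P\ast D_P/sD_P$ that is a zero-divisor. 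Hence this ring is not HMCM, and so neither is $B_{\mathfrak q}$.

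There is essentially no obstacle here: all the real work, namely that $B$ is HMCM and that the identification $B_{\mathfrak q}\cong D_P\ast D_P/sD_P$ holds, was done in the example. The one point to state explicitly is that HMCM is a property preserved under isomorphism, so the identification of $B_{\mathfrak q}$ with the idealization transfers the failure.
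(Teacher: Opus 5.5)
Your proposal is correct and follows the paper's proof: take $B$ and $\mathfrak q$ from Example~\ref{ex:hmcm-ring-localizes-to-bad-idealization}, then apply Proposition~\ref{prop:idealization-obstruction} to $D_P\ast D_P/sD_P$ with $a=s$. Your separate checks that $D_P/sD_P\neq 0$ and that $s$ is regular, a nonunit, and annihilates the module just spell out the paper's one-line remark that $D_P$ is a local domain and $0\neq s$ is a nonunit.
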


\begin{proof}
    Let $B$ and $\mathfrak q\in \spec B$ be as in
    Example~\ref{ex:hmcm-ring-localizes-to-bad-idealization}. Then $B$ is
    HMCM and
    \[
        B_{\mathfrak q}
        \cong
        D_P\ast D_P/sD_P.
    \]
    The ring $D_P$ is a one-dimensional local domain, and $0\neq s\in sD_P$ is a nonunit. Applying Proposition~\ref{prop:idealization-obstruction},
    we see that $D_P \ast D_P/sD_P$
    is not HMCM. Hence $B_{\mathfrak q}$ is not HMCM.
\end{proof}

\begin{rem}
    In the notation of Example~\ref{ex:hmcm-ring-localizes-to-bad-idealization}, the construction also gives a torsion-free $A$-module whose localization is not torsion-free. Indeed, one has $\Reg(A)=A^\times$, and hence every $A$-module, in particular $A/I$, is torsion-free. On the other hand,
    $A_{\mathfrak p}\cong D_P$ and $(A/I)_{\mathfrak p}\cong D_P/sD_P$,
    and the latter module is not torsion-free over $D_P$. We study this failure of torsion-free localization systematically in the next section.
\end{rem}

%
%

\section{Localization of torsion-free modules}\label{sec:loctf}




The example constructed in the previous section shows that the HMCM property is not preserved under localization in general. Nevertheless, the localization problem remains open under additional finiteness assumptions, for instance for coherent rings. To study possible positive results in this setting, it is necessary to understand how regular elements behave under localization. Every regular element remains regular after localization. Conversely, however, a regular element in a localized ring need not arise from a global regular element. This phenomenon can be formulated as a problem concerning the localization of torsion-free modules. In general, the localization of a torsion-free module need not remain torsion-free. In this section, we study this phenomenon independently of the HMCM property and characterize the rings over which torsion-free modules remain torsion-free after localization.

We say that an $A$-module $M$ is torsion-free if, for every $a\in\operatorname{Reg}(A)$, the natural multiplication map $a\colon M\to M$ is injective. In general, even if $M$ is torsion-free over $A$, the localization $M_P$ need not be torsion-free over $A_P$.


\begin{defi}
    A ring $A$ is said to satisfy the property $\mathrm{LocTF}$ if, for every torsion-free $A$-module $M$ and every prime ideal $P\in\operatorname{Spec} A$, the localization $M_P$ is torsion-free over $A_P$.
\end{defi}

In \cite{Ando2026}, the condition $\dim Q(A)=0$ was given as a sufficient condition for a ring to satisfy $\mathrm{LocTF}$; see \cite{Ando2026}*{Proposition~4.5}. However, this condition is not necessary; see \cite{Ando2026}*{Example~4.9}. In this paper, instead of considering the dimension of $Q(A)$, we examine how the set of zero-divisors behaves under localization and give reformulations of $\mathrm{LocTF}$.


We first reformulate $\mathrm{LocTF}$ in terms of ideals. This reformulation is useful because it eliminates the need to test all modules.


\begin{defi}
    For an ideal $I\subset A$, set
    \[
        I^{\mathrm{reg}}
        =
        \mkset{a\in A}{\text{there exists } b\in \operatorname{Reg}(A) \text{ such that } ab\in I}.
    \]
    We call $I^{\mathrm{reg}}$ the regular saturation of $I$.
\end{defi}


\begin{thm}\label{thm:main-equivalences}
Let $A$ be a ring. The following conditions are equivalent.
\begin{enumerate}
    \item $A$ satisfies $\mathrm{LocTF}$.
    \item For every $P\in\operatorname{Spec} A$ and every ideal $I\subset A$, one has $(I^{\mathrm{reg}})_P=(I_P)^{\mathrm{reg}}$.
    \item For every $P\in\operatorname{Spec} A$ and every $\alpha\in \operatorname{Reg}(A_P)$, one has $\alpha A_P\cap \operatorname{Reg}(A)_P\neq\emptyset$.
    \item For every $P\in\operatorname{Spec} A$, the natural homomorphism $Q(A)_P\rightarrow Q(A_P)$ is an isomorphism.
\end{enumerate}
\end{thm}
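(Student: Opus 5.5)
We prove the cycle $(1)\Rightarrow(2)\Rightarrow(3)\Rightarrow(1)$ and then, separately, $(3)\Leftrightarrow(4)$. Write $S=\operatorname{Reg}(A)$ throughout. Since $A\to A_P$ is flat, the image $s/1$ of each $s\in S$ lies in $\operatorname{Reg}(A_P)$. We read $\operatorname{Reg}(A)_P$ in (3) as the set of elements $s/w$ with $s\in S$ and $w\notin P$; these are again regular in $A_P$. Two easy facts will be used repeatedly:
\begin{itemize}
\item[(a)] the inclusion $(I^{\mathrm{reg}})_P\subset (I_P)^{\mathrm{reg}}$ always holds, because $ab\in I$ with $b\in S$ gives $(a/u)(b/1)\in I_P$ with $b/1$ regular in $A_P$;
\item[(b)] for every ideal $I$, the module $A/I^{\mathrm{reg}}$ is torsion-free, since $sa\in I^{\mathrm{reg}}$ with $s\in S$ forces $a\in I^{\mathrm{reg}}$.
\end{itemize}

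\emph{$(1)\Rightarrow(2)$.} By (b) and LocTF, the $A_P$-module $A_P/(I^{\mathrm{reg}})_P$ is torsion-free. Let $x\in (I_P)^{\mathrm{reg}}$, so $\beta x\in I_P$ for some $\beta\in\operatorname{Reg}(A_P)$. Since $I_P\subset (I^{\mathrm{reg}})_P$, we get $\beta\bar x=0$ in $A_P/(I^{\mathrm{reg}})_P$. Torsion-freeness then gives $x\in (I^{\mathrm{reg}})_P$. Together with (a), this proves (2).

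\emph{$(2)\Rightarrow(3)$.} Let $\alpha=a/u\in\operatorname{Reg}(A_P)$ and take $I=aA$. Then $\alpha\cdot 1\in I_P$ with $\alpha$ regular, so $1\in (I_P)^{\mathrm{reg}}=(I^{\mathrm{reg}})_P$. This produces $t\notin P$ with $t\in I^{\mathrm{reg}}$, that is, $ts=ab$ for some $s\in S$ and $b\in A$. Hence
\[
s/1=(a/u)(ub/t)\in \alpha A_P\cap \operatorname{Reg}(A)_P .
\]

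\emph{$(3)\Rightarrow(1)$.} Let $M$ be torsion-free. Suppose $\alpha\in\operatorname{Reg}(A_P)$ and $\alpha(x/u)=0$ in $M_P$. By (3) choose $\gamma\in A_P$ with $\alpha\gamma=s/w$, where $s\in S$ and $w\notin P$. Then $(s/w)(x/u)=0$, so $w'sx=0$ for some $w'\notin P$. This reads $s(w'x)=0$, and torsion-freeness gives $w'x=0$. Hence $x/u=0$, so $M_P$ is torsion-free.

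\emph{$(3)\Leftrightarrow(4)$.} We identify $Q(A)_P$ with $T^{-1}A_P$, where $T=\{s/1 : s\in S\}\subset\operatorname{Reg}(A_P)$. The natural map $T^{-1}A_P\to Q(A_P)=\operatorname{Reg}(A_P)^{-1}A_P$ is then a further localization. It is injective because $T^{-1}A_P\subset Q(A_P)$: elements of $T$ are regular on $A_P$, so $T^{-1}A_P$ embeds in the total ring of fractions.

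It is therefore an isomorphism exactly when every $\alpha\in\operatorname{Reg}(A_P)$ becomes a unit in $T^{-1}A_P$. That condition says $\alpha\cdot(c/(s/1))=1$ in $T^{-1}A_P$ for some $c\in A_P$ and $s\in S$. Because elements of $T$ are regular on $A_P$, this is equivalent to $\alpha c=s/1$ in $A_P$. In other words, $\alpha A_P$ meets $\operatorname{Reg}(A)_P$; up to the harmless unit $1/w$, this is exactly (3).

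The step I expect to need the most care is this last bookkeeping. One must verify injectivity of $Q(A)_P\to Q(A_P)$ and check that ``unit in $T^{-1}A_P$'' translates exactly into condition (3), which uses that elements of $T$ are regular in $A_P$. The other implications are short diagram-free element chases.
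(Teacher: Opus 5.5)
Your proof is correct, but it connects condition (1) to the others differently from the paper.

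\textbf{The paper's arrangement.} The paper treats (3) as a hub:
\begin{itemize}
\item it proves (3)$\Rightarrow$(1) and (3)$\Rightarrow$(2) by element chases;
\item it proves (2)$\Rightarrow$(3) with $I=aA$, exactly as you do;
\item it proves (1)$\Rightarrow$(3) by contraposition. It takes the saturation $J$ of $\alpha A_P$ with respect to $\operatorname{Reg}(A)_P$ and observes that $A_P/J$ is a nonzero $A$-torsion-free module killed by $\alpha$.
\end{itemize}

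\textbf{Your arrangement.} You prove (1)$\Rightarrow$(2) directly from the torsion-freeness of $A/I^{\mathrm{reg}}$. This closes a cycle $(1)\Rightarrow(2)\Rightarrow(3)\Rightarrow(1)$, so you never need a separate chase for (3)$\Rightarrow$(2).

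\textbf{The witnesses coincide.} For $\alpha=a/s$ one checks that $J=((aA)^{\mathrm{reg}})_P$. So the paper's $A_P/J$ is just the localization of your $A/(aA)^{\mathrm{reg}}$.

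\textbf{What each approach buys.}
\begin{itemize}
\item Yours makes explicit that $\mathrm{LocTF}$ need only be tested on the cyclic torsion-free modules $A/(aA)^{\mathrm{reg}}$, $a\in A$.
\item The paper's gives a direct proof of each equivalence with (3), which is the form used later, for instance in Proposition~\ref{prop:pp}. Its witness is also already $P$-local, so the failure is visible without a further localization.
\end{itemize}

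Your treatment of (3)$\Leftrightarrow$(4) via $Q(A)_P\cong T^{-1}A_P$ is the paper's argument. You simply write out the injectivity step and the translation of ``$\alpha$ becomes a unit'' in more detail.
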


\begin{proof}
    We first prove the equivalence of (1) and (3). Assume (3). Let $M$ be a torsion-free $A$-module, and let $\alpha\in\operatorname{Reg}(A_P)$ and $\bar{x}=x/s\in M_P$ be such that $\alpha\bar{x}=0$. By (3), there exist $\beta\in A_P$ and $c/t\in\operatorname{Reg}(A)_P$ such that $\alpha\beta=c/t$. Hence $(c/t)\bar{x}=0$. Thus there exists $h\notin P$ such that $hcx=0$. Since $c\in\operatorname{Reg}(A)$ and $M$ is torsion-free over $A$, we have $hx=0$. Hence $\bar{x}=0$. Therefore $M_P$ is torsion-free over $A_P$.

    Conversely, suppose that (3) fails. Then there exists $\alpha=a/s\in\operatorname{Reg}(A_P)$ such that $\alpha A_P\cap \operatorname{Reg}(A)_P=\emptyset$. Let $J$ be the $\operatorname{Reg}(A)_P$-saturation of $\alpha A_P$ in $A_P$:
    \[
        J\coloneq\mkset{\beta\in A_P}{\text{there exists } c/t\in\operatorname{Reg}(A)_P \text{ such that } \beta(c/t)\in\alpha A_P}.
    \]
    Then $A_P/J$ is torsion-free as an $A$-module. Indeed, if $c\in\operatorname{Reg}(A)$ and $c\beta\in J$, then $\beta\in J$ by the definition of $J$. On the other hand, $\alpha(A_P/J)=0$. Moreover, since $\alpha A_P\cap\operatorname{Reg}(A)_P=\emptyset$, we have $1\notin J$, and hence $A_P/J\neq0$. Thus $A_P/J$ is not torsion-free as an $A_P$-module. Since $(A_P/J)_P\cong A_P/J$, the property $\mathrm{LocTF}$ fails.

    Next we prove the equivalence of (2) and (3). For every ideal $I\subset A$, we always have $(I^{\mathrm{reg}})_P\subset (I_P)^{\mathrm{reg}}$. Assume (3), and let $a/s\in (I_P)^{\mathrm{reg}}$. Then there exists $\alpha\in\operatorname{Reg}(A_P)$ such that $(a/s)\alpha\in I_P$. By (3), there exist $\beta\in A_P$ and $b/t\in\operatorname{Reg}(A)_P$ such that $\alpha\beta=b/t$. Hence $ab/st\in I_P$, so there exists $h\notin P$ such that $hab\in I$. Since $b\in\operatorname{Reg}(A)$, this means that $ha\in I^{\mathrm{reg}}$. Therefore $a/s=ha/(hs)\in (I^{\mathrm{reg}})_P$. Thus (2) holds.

    Conversely, assume (2), and let $\alpha=a/s\in\operatorname{Reg}(A_P)$. Since $s$ is a unit in $A_P$, the element $a/1=s\alpha$ also belongs to $\operatorname{Reg}(A_P)$. Hence $(aA_P)^{\mathrm{reg}}=A_P$. Applying (2) to the ideal $I=aA$, we get $A_P=(aA_P)^{\mathrm{reg}}=((aA)^{\mathrm{reg}})_P$. Therefore there exists $h\notin P$ such that $h\in (aA)^{\mathrm{reg}}$. Hence there exists $c\in\operatorname{Reg}(A)$ such that $ch\in aA$. Since $h$ is a unit in $A_P$, we have $c/1\in aA_P=\alpha A_P$. Thus $\alpha A_P\cap\operatorname{Reg}(A)_P\neq\emptyset$, proving (3).

    Finally, we prove the equivalence of (3) and (4). The natural map
    $Q(A)_P \to Q(A_P)$ is always injective, because every element of
    $\Reg(A_P)$ is a non-zero-divisor on $A_P$. Thus it is an isomorphism if and
    only if it is surjective. Surjectivity means that, for every
    $\alpha\in\Reg(A_P)$, the element $1/\alpha\in Q(A_P)$ comes from $Q(A)_P$.
    This is equivalent to saying that $\alpha$ becomes invertible in $Q(A)_P$.
    Since every element of $\Reg(A)_P$ is a non-zero-divisor on $A_P$, this holds
    if and only if $\alpha A_P$ contains an element of $\Reg(A)_P$. This is
    condition (3).
\end{proof}

Next, focusing on condition (4) of Theorem~\ref{thm:main-equivalences}, we reformulate it further in terms of prime ideals.

\begin{defi}
    Set
    \[
        \Sigma(A)=\mkset{P\in\operatorname{Spec} A}{P\cap\operatorname{Reg}(A)=\emptyset}.
    \]
    Thus $\Sigma(A)$ is the set of prime ideals all of whose elements are zero-divisors on $A$, and it can be naturally identified with $\operatorname{Spec} Q(A)$.
\end{defi}

With this notation, Theorem~\ref{thm:main-equivalences} can be reformulated as follows.

\begin{prop}\label{prop:sigma}
    For a ring $A$, the following conditions are equivalent.
    \begin{enumerate}
        \item $A$ satisfies $\mathrm{LocTF}$.
        \item For every $P\in\operatorname{Spec} A$, one has $ \Sigma(A_P) = \mkset{\mathfrak q A_P}{\mathfrak q\in\Sigma(A), \mathfrak q\subset P}$. 
        \item For every $P\in\Sigma(A)$, one has $PA_P\in\Sigma(A_P)$.
    \end{enumerate}
\end{prop}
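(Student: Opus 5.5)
We will prove (1)$\Leftrightarrow$(2) and (2)$\Leftrightarrow$(3), using condition (4) of Theorem~\ref{thm:main-equivalences}. The key observation is that $\Sigma(R)$ is the image of $\spec Q(R)\to\spec R$. Moreover, $Q(A)_P=\Reg(A)^{-1}A_P$ is a localization of $A_P$. Its spectrum is therefore identified with the set of primes of $A_P$ that miss $\Reg(A)_P$, which is $\mkset{\mathfrak qA_P}{\mathfrak q\in\Sigma(A),\ \mathfrak q\subset P}$. Call this set $T_P$. Since $\Reg(A)_P\subset\Reg(A_P)$, we always have $\Sigma(A_P)\subset T_P$. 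The map $Q(A)_P\to Q(A_P)$ is the further localization at the image of $\Reg(A_P)$.

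For (1)$\Rightarrow$(2), condition (4) says $Q(A)_P\cong Q(A_P)$, so the spectra agree. Hence every prime of $A_P$ missing $\Reg(A)_P$ also misses $\Reg(A_P)$, which gives $T_P=\Sigma(A_P)$.

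For (2)$\Rightarrow$(1), we verify condition (3) of Theorem~\ref{thm:main-equivalences}. Let $\alpha\in\Reg(A_P)$ and suppose $\alpha A_P\cap\Reg(A)_P=\emptyset$. The multiplicative set $\Reg(A)_P$ is saturated up to units. Hence the ideal $\alpha A_P$, extended to the localization $Q(A)_P$, is proper, so it lies in some prime of $Q(A)_P$. Contracting gives a prime $\mathfrak qA_P\in T_P$ containing $\alpha$. By (2), $\mathfrak qA_P\in\Sigma(A_P)$, which contradicts $\alpha\in\Reg(A_P)$. So (3) holds. The same reasoning shows that (1) is equivalent to ``every prime in $T_P$ misses $\Reg(A_P)$'', that is, $T_P\subset\Sigma(A_P)$.

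(2)$\Rightarrow$(3) is immediate: take $P\in\Sigma(A)$ and $\mathfrak q=P$. For (3)$\Rightarrow$(2), let $\mathfrak q\in\Sigma(A)$ with $\mathfrak q\subset P$; we must show $\mathfrak qA_P\in\Sigma(A_P)$. By (3), $\mathfrak qA_{\mathfrak q}\in\Sigma(A_{\mathfrak q})$. Now take $\alpha=a/s\in\mathfrak qA_P$. If $\alpha$ were regular on $A_P$, we would like $a/1$ to be regular on $A_{\mathfrak q}$, since $A_{\mathfrak q}$ is a localization of $A_P$. This is the main obstacle: regularity does not pass to arbitrary localizations. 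Localization is flat, so injectivity of multiplication by $a$ does pass from $A_P$ to $A_{\mathfrak q}=(A_P)_{\mathfrak qA_P}$. Thus $a/1$ is a non-zero-divisor in $A_{\mathfrak q}$ lying in $\mathfrak qA_{\mathfrak q}$, contradicting $\mathfrak qA_{\mathfrak q}\in\Sigma(A_{\mathfrak q})$. Hence $\mathfrak qA_P$ consists of zero-divisors, so $T_P\subset\Sigma(A_P)$, and together with the automatic inclusion this gives (2). We will double-check the flatness step, namely that regular elements stay regular under localization, which is exactly the statement ``every regular element remains regular after localization'' noted at the start of this section.
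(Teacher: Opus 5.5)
Your proof is correct. It follows the route the paper indicates: the paper gives no separate proof and only calls the statement a reformulation of Theorem~\ref{thm:main-equivalences}. Reading condition (4) there on spectra via $\Sigma(R)\cong\spec Q(R)$ gives (1)$\Leftrightarrow$(2), and your flatness argument through $A_{\mathfrak q}=(A_P)_{\mathfrak qA_P}$ supplies the (3)$\Rightarrow$(2) step the paper leaves implicit.

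Two small fixes. The ``saturated up to units'' remark is unnecessary, since for any multiplicative set $S$ and ideal $J$ the extension $S^{-1}J$ is proper as soon as $J\cap S=\emptyset$. Also, regular elements \emph{do} stay regular under localization; only the converse fails. So the sentence calling this the ``main obstacle'' should be dropped.
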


Without the Noetherian hypothesis, the usual associated primes do not control zero-divisors sufficiently well. We therefore use Krull primes, introduced by Krull \cite{Krull1929}. Krull primes are one generalization of associated primes. As an example of their use, we mention the work of Fuchs--Heinzer--Olberding \cite{Fuchs-Heinzer-Olberding2006}, which studies irreducibility of ideals without assuming Noetherianity. For issues concerning Krull primes and their generalizations, see also \cite{Epstein-Shapiro2014}.


\begin{defi}[\cite{Krull1929}]
    For an $A$-module $M$, a prime ideal $P\in\operatorname{Spec} A$ is called a Krull prime of $M$ if, for every $a\in P$, there exists $x\in M$ such that $a\in\operatorname{Ann}_A(x)\subset P$.
    We denote by $\operatorname{KAss}_A(M)$ the set of all Krull primes of $M$.
\end{defi}


\begin{lem}\label{lem:krull-local}
    For a prime ideal $P\in\operatorname{Spec} A$, the following conditions are equivalent.
    \begin{enumerate}
        \item $P\in\operatorname{KAss}_A(A)$.
        \item $PA_P\cap\operatorname{Reg}(A_P)=\emptyset$.
        \item $PA_P\in\Sigma(A_P)$.
    \end{enumerate}
\end{lem}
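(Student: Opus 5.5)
(2)$\Leftrightarrow$(3) is immediate from the definitions. By definition, $\Sigma(A_P)$ consists of the primes of $A_P$ disjoint from $\operatorname{Reg}(A_P)$, and $PA_P\cap \operatorname{Reg}(A_P)=\emptyset$ says exactly that $PA_P\in\Sigma(A_P)$. So the real content is (1)$\Leftrightarrow$(2).

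For (1)$\Rightarrow$(2), assume $P\in\operatorname{KAss}_A(A)$ and let $\alpha=a/s\in PA_P$. We may take $a\in P$, since $s$ is a unit in $A_P$. The Krull prime condition gives $x\in A$ with $a\in\operatorname{Ann}_A(x)\subset P$. Then $ax=0$, so $(a/1)(x/1)=0$ in $A_P$. It remains to show $x/1\neq 0$. If $x/1=0$, there would be $h\notin P$ with $hx=0$, so $h\in\operatorname{Ann}_A(x)\subset P$, a contradiction. Hence $\alpha$ is a zero-divisor on $A_P$, and $PA_P\cap\operatorname{Reg}(A_P)=\emptyset$.

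For (2)$\Rightarrow$(1), assume every element of $PA_P$ is a zero-divisor on $A_P$, and let $a\in P$. Then $a/1$ kills some nonzero $x/t\in A_P$, so there is $h\notin P$ with $hax=0$. Set $y=hx$. Then $ay=0$, so $a\in\operatorname{Ann}_A(y)$. Now suppose $b\in\operatorname{Ann}_A(y)$ with $b\notin P$. Then $bhx=0$ with $bh\notin P$, which forces $x/1=0$ in $A_P$, a contradiction. Hence $\operatorname{Ann}_A(y)\subset P$, and $P$ is a Krull prime of $A$.

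The one delicate point is the choice of witness in (2)$\Rightarrow$(1). The annihilator of the original $x$ need not lie in $P$, so we must pass to $y=hx$. This replacement is what makes $\operatorname{Ann}_A(y)\subset P$ hold, and it is the step I expect to need the most care. Everything else is routine bookkeeping with fractions.
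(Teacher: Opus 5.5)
Your proof is correct and follows essentially the same route as the paper: (2)$\Leftrightarrow$(3) by definition, (1)$\Rightarrow$(2) via the Krull witness whose annihilator lies in $P$ (so its image in $A_P$ is nonzero), and (2)$\Rightarrow$(1) by replacing the witness $x$ with $hx$, exactly as the paper does, so that its annihilator lies in $P$.
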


\begin{proof}
    The equivalence of (2) and (3) follows from the definition.

    Assume (1). Let $\alpha=a/s\in PA_P$ be arbitrary. We may assume that $a\in P$. By the definition of Krull primes, there exists $b\in A$ such that $ab=0$ and $\operatorname{Ann}_A(b)\subset P$. Then $b/1\neq0$ in $A_P$, and hence $\alpha$ is a zero-divisor on $A_P$.

    Conversely, assume (2). Let $a\in P$ be arbitrary. Since $a/1\notin\operatorname{Reg}(A_P)$, there exists $b/s\neq0$ in $A_P$ such that $(a/1)(b/s)=0$. Hence there exists $h\notin P$ such that $hab=0$. Replacing $b$ by $hb$, we may assume that $ab=0$ and $b/1\neq0$ in $A_P$. If $c\in\operatorname{Ann}_A(b)$, then $c\in P$. Indeed, if $c\notin P$, then $c$ is a unit in $A_P$, and hence $b/1=0$, a contradiction. Therefore $a\in\operatorname{Ann}_A(b)\subset P$.
\end{proof}

\begin{thm}\label{thm:krull-characterization}
    For a ring $A$, the following conditions are equivalent.
    \begin{enumerate}
        \item $A$ satisfies $\mathrm{LocTF}$.
        \item $\Sigma(A)\subset \operatorname{KAss}_A(A)$.
        \item $\Sigma(A)=\operatorname{KAss}_A(A)$.
    \end{enumerate}
\end{thm}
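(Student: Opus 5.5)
The plan is to derive the theorem from Proposition~\ref{prop:sigma} together with Lemma~\ref{lem:krull-local}. Condition (3) of Proposition~\ref{prop:sigma} says that $PA_P\in\Sigma(A_P)$ for every $P\in\Sigma(A)$, and it is equivalent to $\mathrm{LocTF}$. By Lemma~\ref{lem:krull-local}, $PA_P\in\Sigma(A_P)$ holds if and only if $P\in\operatorname{KAss}_A(A)$. So condition (3) of Proposition~\ref{prop:sigma} reads exactly $\Sigma(A)\subset\operatorname{KAss}_A(A)$, and this gives (1)$\Leftrightarrow$(2) at once.

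The implication (3)$\Rightarrow$(2) is trivial, so it remains to prove (2)$\Rightarrow$(3). For this I would show that the inclusion $\operatorname{KAss}_A(A)\subset\Sigma(A)$ holds for every ring, independently of any hypothesis. Let $P\in\operatorname{KAss}_A(A)$ and $a\in P$. By definition there is $x\in A$ with $a\in\operatorname{Ann}_A(x)\subset P$. Since $1\notin P$, we have $x\neq0$, and $ax=0$. Hence $a$ is a zero-divisor on $A$. Therefore $P\cap\Reg(A)=\emptyset$, that is, $P\in\Sigma(A)$. Combining this with (2) yields the equality in (3).

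If Proposition~\ref{prop:sigma} has not yet been proved at this point, its step (1)$\Leftrightarrow$(3) must be supplied, and I expect this to be the main obstacle. I would argue through Theorem~\ref{thm:main-equivalences}(4), which says that $Q(A)_P\to Q(A_P)$ is an isomorphism for every $P$.

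For (1)$\Rightarrow$(3), let $P\in\Sigma(A)$. Then $P$ corresponds to a prime of $Q(A)$, so $Q(A)_P\neq0$, and the maximal ideal of $Q(A)_P$ is $PQ(A)_P$. Via the isomorphism $Q(A)_P\cong Q(A_P)$, the ring $Q(A_P)$ therefore has a prime contracting to $PA_P$. Every prime of $Q(A_P)$ contracts into the zero-divisors of $A_P$, so $PA_P\cap\Reg(A_P)=\emptyset$.

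For (3)$\Rightarrow$(1), I would verify Theorem~\ref{thm:main-equivalences}(3). Take $\alpha\in\Reg(A_P)$ and suppose that $\alpha A_P\cap\Reg(A)_P=\emptyset$. The elements of $\Reg(A)$, viewed in $A_P$, form a multiplicative set disjoint from the ideal $\alpha A_P$. Expand $\alpha A_P$ to an ideal maximal with respect to avoiding that multiplicative set; this ideal is prime, namely $QA_P$ for some $Q\subset P$ with $Q\cap\Reg(A)=\emptyset$ (using that $h c$ with $h\notin P$, $c\in\Reg(A)$ cannot lie in $QA_P$ unless $c\in Q$). Then $Q\in\Sigma(A)$, so by (3) and Lemma~\ref{lem:krull-local}, $QA_Q\cap\Reg(A_Q)=\emptyset$. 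On the other hand, $\alpha\in QA_P$ and $\alpha$ remains regular in $A_Q$, since $A_Q$ is a localization of $A_P$. This contradicts $QA_Q\cap\Reg(A_Q)=\emptyset$.
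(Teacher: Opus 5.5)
Your proof is correct and follows the paper's argument: (1)$\Leftrightarrow$(2) comes from reading condition (3) of Proposition~\ref{prop:sigma} through Lemma~\ref{lem:krull-local}, and (2)$\Leftrightarrow$(3) comes from the unconditional inclusion $\operatorname{KAss}_A(A)\subset\Sigma(A)$. Your supplementary derivation of (1)$\Leftrightarrow$(3) in Proposition~\ref{prop:sigma} from Theorem~\ref{thm:main-equivalences} is also sound, and it fills in a step that the paper states without proof.
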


\begin{proof}
    The equivalence of (1) and (2) follows from Proposition~\ref{prop:sigma} and Lemma~\ref{lem:krull-local}. 

    It remains to compare (2) and (3). We always have
    $\operatorname{KAss}_A(A)\subset \Sigma(A)$. Indeed, if
    $P\in\operatorname{KAss}_A(A)$ and $a\in P$, then by definition there exists
    $b\in A$ such that $a\in\operatorname{Ann}_A(b)\subset P$. In particular,
    $a$ is a zero-divisor. Hence $P\cap\operatorname{Reg}(A)=\emptyset$, so
    $P\in\Sigma(A)$.

    Therefore condition (2) is equivalent to
    $\Sigma(A)=\operatorname{KAss}_A(A)$, which is condition (3).
\end{proof}


We next consider the Noetherian case.

\begin{rem}
    In some references, such as \cite{Iroz-Rush1984}, which discuss inclusion relations among various classes of associated-prime-type notions, one finds statements asserting that, over Noetherian rings, Krull primes coincide with the usual associated primes. We note that this is false, as pointed out in \cite{Epstein-Shapiro2014}*{Remark~2.2}.
\end{rem}

In this paper, we prove that they do coincide when $M$ is finitely generated. The statement itself appears in \cite{Epstein-Shapiro2014}, but the proof is omitted there.


\begin{prop}
    Let $A$ be a Noetherian ring, and let $M$ be a finitely generated $A$-module.
    Then $\operatorname{KAss}_A(M)=\operatorname{Ass}_A(M)$.
\end{prop}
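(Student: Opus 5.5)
We prove the two inclusions separately; only one of them uses the hypotheses. The inclusion $\operatorname{Ass}_A(M)\subset\operatorname{KAss}_A(M)$ holds for any ring and any module. If $P=\operatorname{Ann}_A(x)$ for some $x\in M$, then for every $a\in P$ the same element $x$ satisfies $a\in\operatorname{Ann}_A(x)=P\subset P$. So $P$ is a Krull prime.

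For the converse, let $P\in\operatorname{KAss}_A(M)$. The plan is to show that $P$ consists of zero-divisors on a localization of $M$, and then to use prime avoidance as in Lemma~\ref{lem:grade-positive-noetherian}.

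First we localize. For $a\in P$, choose $x\in M$ with $a\in\operatorname{Ann}_A(x)\subset P$. Then $x/1\neq 0$ in $M_P$: otherwise some $h\notin P$ would kill $x$, giving $h\in\operatorname{Ann}_A(x)\subset P$, a contradiction. Since $a\cdot x/1=0$, every element of $PA_P$ (after clearing a unit denominator) is a zero-divisor on the $A_P$-module $M_P$. This is the module analogue of Lemma~\ref{lem:krull-local}, (1)$\Rightarrow$(2).

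Next we apply the Noetherian theory to $M_P$. It is a finitely generated module over the Noetherian ring $A_P$, so its set of zero-divisors is the finite union $\bigcup_{Q\in\operatorname{Ass}_{A_P}(M_P)}Q$. We have just seen that $PA_P$ is contained in this union, so prime avoidance gives $PA_P\subset Q$ for some associated prime $Q$ of $M_P$. Since $PA_P$ is the maximal ideal, $Q=PA_P$, that is, $PA_P\in\operatorname{Ass}_{A_P}(M_P)$.

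Finally, we descend using the standard compatibility $\operatorname{Ass}_{A_P}(M_P)=\{QA_P : Q\in\operatorname{Ass}_A M,\ Q\subset P\}$, valid for $A$ Noetherian. Concretely, $PA_P=\operatorname{Ann}_{A_P}(x/s)$. Write $P=(p_1,\dots,p_k)$, which is possible because $A$ is Noetherian. For each $i$ there is $h_i\notin P$ with $h_ip_ix=0$. Put $h=\prod_i h_i$ and $y=hx$. Then $P\subset\operatorname{Ann}_A(y)$. Conversely, if $cy=0$ then $c\cdot x/s=0$ in $M_P$, so $c\in PA_P\cap A=P$. Hence $P=\operatorname{Ann}_A(y)\in\operatorname{Ass}_A(M)$.

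The main obstacle is the prime-avoidance step. It is the only place where finite generation is really needed: it guarantees that $\operatorname{Ass}_{A_P}(M_P)$ is finite. This is exactly why the statement fails for non-finitely generated $M$, as noted in the preceding remark. The descent step uses finite generation of $P$, to choose the single element $h$, and is routine.
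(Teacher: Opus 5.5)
Your proof is correct and follows essentially the same route as the paper: you reduce to the local ring $A_P$, observe that $PA_P$ consists of zero-divisors on $M_P$, and use finiteness of $\operatorname{Ass}_{A_P}(M_P)$ together with prime avoidance to get $PA_P\in\operatorname{Ass}_{A_P}(M_P)$. You also verify two localization facts that the paper only asserts: that the Krull prime condition passes to $M_P$, and that $PA_P\in\operatorname{Ass}_{A_P}(M_P)$ descends to $P\in\operatorname{Ass}_A(M)$ via a finite generating set of $P$.
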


\begin{proof}
    The inclusion $\operatorname{Ass}_A(M)\subset \operatorname{KAss}_A(M)$ is clear from the definition. We prove the reverse inclusion. Let $P\in\operatorname{KAss}_A(M)$. Then $PA_P\in\operatorname{KAss}_{A_P}(M_P)$. Moreover, if $PA_P\in\operatorname{Ass}_{A_P}(M_P)$, then $P\in\operatorname{Ass}_A(M)$. Thus we may replace $A$ by $A_P$ and assume that $(A,\mathfrak{m})$ is local and that $\mathfrak{m}\in\operatorname{KAss}_A(M)$.

    We show that $\mathfrak{m}\in\operatorname{Ass}_A(M)$. Since $\mathfrak{m}\in\operatorname{KAss}_A(M)$, every element of $\mathfrak{m}$ is a zero-divisor on $M$. Hence
    \[
        \mathfrak{m}\subset \bigcup_{\ideal{q}\in\operatorname{Ass}_A(M)} \ideal{q}.
    \]
    Since $A$ is Noetherian and $M$ is finitely generated, the set $\operatorname{Ass}_A(M)$ is finite. Therefore, by prime avoidance, there exists $\ideal{q}\in\operatorname{Ass}_A(M)$ such that $\mathfrak{m}\subset \ideal{q}$. Since $A$ is local, every proper prime ideal is contained in $\mathfrak{m}$, and hence $\ideal{q}=\mathfrak{m}$. Thus $\mathfrak{m}\in\operatorname{Ass}_A(M)$.
\end{proof}

Thus Theorem~\ref{thm:krull-characterization} gives the following consequence.

\begin{cor}\label{cor:noetherian}
    Let $A$ be a Noetherian ring. The following conditions are equivalent.
    \begin{enumerate}
        \item $A$ satisfies $\mathrm{LocTF}$.
        \item $\Sigma(A)=\operatorname{Ass}_A(A)$.
        \item For every $P\in\operatorname{Ass}_A(A)$ and every prime ideal $\mathfrak q\subset P$, one has $\mathfrak q\in\operatorname{Ass}_A(A)$. In other words, $\operatorname{Ass}_A(A)$ is closed under generization.
    \end{enumerate}
\end{cor}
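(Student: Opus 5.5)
The plan is to reduce everything to Theorem~\ref{thm:krull-characterization} and the preceding proposition identifying Krull primes with associated primes. For a Noetherian ring $A$, the module $A$ is finitely generated over itself, so that proposition gives $\operatorname{KAss}_A(A)=\ass_A(A)$. Substituting this into condition (3) of Theorem~\ref{thm:krull-characterization} yields (1)$\Leftrightarrow$(2) immediately.

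For (2)$\Leftrightarrow$(3), I will first describe $\Sigma(A)$ for Noetherian $A$. The set of zero-divisors of $A$ is $\bigcup_{\ideal{q}\in\ass_A(A)}\ideal{q}$, a finite union. A prime $P$ lies in $\Sigma(A)$ exactly when $P$ consists of zero-divisors, that is, $P\subset\bigcup_{\ideal{q}\in\ass_A(A)}\ideal{q}$. By prime avoidance, this holds if and only if $P\subset\ideal{q}$ for some $\ideal{q}\in\ass_A(A)$. Conversely, any prime contained in an associated prime consists of zero-divisors. Hence
\[
\Sigma(A)=\mkset{P\in\spec A}{P\subset\ideal{q}\text{ for some }\ideal{q}\in\ass_A(A)},
\]
which is the generization closure of $\ass_A(A)$.

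With this description, $\Sigma(A)=\ass_A(A)$ holds if and only if $\ass_A(A)$ equals its own generization closure. Since $\ass_A(A)\subset\Sigma(A)$ always (associated primes consist of zero-divisors), this is equivalent to saying that every prime contained in some $P\in\ass_A(A)$ is itself associated. That is exactly condition (3).

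No step here is a real obstacle. The only point requiring care is the use of finiteness of $\ass_A(A)$ together with prime avoidance to describe $\Sigma(A)$, and this is the same argument already used in Lemma~\ref{lem:grade-positive-noetherian} and in the proof of the previous proposition.
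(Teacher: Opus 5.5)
Your proposal is correct and follows the paper's proof: for (1)$\Leftrightarrow$(2) you combine $\operatorname{KAss}_A(A)=\operatorname{Ass}_A(A)$ with Theorem~\ref{thm:krull-characterization}, and for (2)$\Leftrightarrow$(3) you describe $\Sigma(A)$ as the generization closure of $\operatorname{Ass}_A(A)$. The only difference is that you spell out the prime avoidance step behind that description of $\Sigma(A)$, which the paper leaves implicit in its ``Hence''.
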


\begin{proof}
    In a Noetherian ring, the set of zero-divisors is the union of the associated primes. Hence
    \[
        \Sigma(A)
        =
        \mkset{\mathfrak q\in\operatorname{Spec} A}
        {\mathfrak q\subset P\text{ for some }P\in\operatorname{Ass}_A(A)}.
    \]
    Moreover, for Noetherian rings, we have
    $\operatorname{KAss}_A(A)=\operatorname{Ass}_A(A)$.

    By Theorem~\ref{thm:krull-characterization}, condition (1) is equivalent to
    \[
        \Sigma(A)=\operatorname{KAss}_A(A).
    \]
    Using $\operatorname{KAss}_A(A)=\operatorname{Ass}_A(A)$, this is exactly condition (2).

    It remains to compare (2) and (3). By the displayed description of $\Sigma(A)$, the equality
    $\Sigma(A)=\operatorname{Ass}_A(A)$ holds if and only if every prime ideal contained in an associated prime is again associated. This is precisely condition (3), namely that $\operatorname{Ass}_A(A)$ is closed under generization.
\end{proof}

\begin{cor}\label{cor:ass-min-loctf}
    Let $A$ be a Noetherian ring. If $\operatorname{Ass}_A(A)=\operatorname{Min}(A)$, then $A$ satisfies $\mathrm{LocTF}$. In particular, every Noetherian reduced ring satisfies $\mathrm{LocTF}$.
\end{cor}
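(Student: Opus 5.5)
The plan is to verify condition (3) of Corollary~\ref{cor:noetherian}: we show that $\operatorname{Ass}_A(A)$ is closed under generization whenever $\operatorname{Ass}_A(A)=\operatorname{Min}(A)$. Let $P\in\operatorname{Ass}_A(A)$ and let $\mathfrak q\subset P$ be a prime ideal. By hypothesis $P$ is a minimal prime of $A$. Since $\mathfrak q$ is a prime contained in the minimal prime $P$, minimality forces $\mathfrak q=P$. Hence $\mathfrak q\in\operatorname{Ass}_A(A)$, so condition (3) holds. Corollary~\ref{cor:noetherian} then shows that $A$ satisfies $\mathrm{LocTF}$.

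For the ``in particular'' part, we only need the standard fact that a Noetherian reduced ring satisfies $\operatorname{Ass}_A(A)=\operatorname{Min}(A)$. Recall that $\operatorname{Min}(A)\subset\operatorname{Ass}_A(A)$ always holds for a Noetherian ring. For the reverse inclusion, note that $A$ is reduced, so $0=\bigcap_{\mathfrak p\in\operatorname{Min}(A)}\mathfrak p$, and the set $\operatorname{Min}(A)$ is finite. Thus $A$ embeds into $\prod_{\mathfrak p\in\operatorname{Min}(A)}A/\mathfrak p$. Each factor $A/\mathfrak p$ has only one associated prime, namely $\mathfrak p$. Associated primes of a submodule are contained in those of the ambient module, so $\operatorname{Ass}_A(A)\subset\operatorname{Min}(A)$. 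Alternatively, the zero-divisors of a reduced Noetherian ring are exactly the elements of the union of the minimal primes, which gives the same conclusion. The first part of the corollary then applies.

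None of these steps is a real obstacle. The only point needing care is to cite the correct standard facts: the existence and finiteness of the minimal primes in a Noetherian ring, the inclusion $\operatorname{Min}\subset\operatorname{Ass}$, and the fact that passing to a submodule can only shrink the set of associated primes.
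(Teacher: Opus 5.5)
Your proof is correct and follows the paper's route: the paper also deduces the claim from condition (3) of Corollary~\ref{cor:noetherian} by noting that $\operatorname{Min}(A)$ is closed under generization. Your embedding $A\hookrightarrow\prod_{\mathfrak p\in\operatorname{Min}(A)}A/\mathfrak p$ only adds a proof of the standard fact $\operatorname{Ass}_A(A)=\operatorname{Min}(A)$ for reduced Noetherian rings, which the paper takes for granted.
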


\begin{proof}
    Since $\operatorname{Min}(A)$ is closed under generization, the assertion follows from Corollary~\ref{cor:noetherian}.
\end{proof} 

We note that Corollary~\ref{cor:ass-min-loctf} also follows from the following lemma of Epstein--Yao.

\begin{lem}[\cite{Epstein-Yao2012}*{Lemma~3.8}]
    Let $A$ be a Noetherian ring, and let $M$ be an $A$-module. Then $M$ is torsion-free if and only if
    \[
    \bigcup_{P\in\operatorname{Ass}(M)}P
    \subset
    \bigcup_{P\in\operatorname{Ass}(A)}P.
    \]
    In particular, if $A$ has no embedded primes, then $M$ is torsion-free if and only if $\operatorname{Ass}(M)\subset\operatorname{Ass}(A)$. In this case, for every multiplicative subset $S\subset A$, the localization $S^{-1}M$ of a torsion-free $A$-module $M$ is torsion-free over $S^{-1}A$.
\end{lem}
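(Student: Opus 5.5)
The plan is to reduce torsion-freeness to a comparison of zero-divisor sets, and then describe both sets by associated primes. For an $A$-module $M$, write $Z(M)$ for the set of zero-divisors on $M$. By definition, $M$ is torsion-free if and only if every $a\in\Reg(A)$ acts injectively on $M$. In other words, $M$ is torsion-free if and only if $\Reg(A)\cap Z(M)=\emptyset$. Since $\Reg(A)$ is exactly the complement of $Z(A)$, this says precisely that $Z(M)\subset Z(A)$.

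The key input is the standard fact that, over a Noetherian ring, $Z(M)=\bigcup_{P\in\ass(M)}P$ for an \emph{arbitrary} module $M$, not only a finitely generated one. I would recall its proof briefly. If $ax=0$ with $x\neq0$, then the family $\mkset{\Ann_A(y)}{0\neq y\in Ax}$ has a maximal member by the Noetherian hypothesis. That maximal member is prime, so it lies in $\ass(M)$, and it contains $\Ann_A(x)\ni a$. The reverse inclusion is immediate. Applying this fact to both $M$ and $A$ turns $Z(M)\subset Z(A)$ into the displayed inclusion, which proves the first assertion.

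For the ``in particular'' part, assume $A$ has no embedded primes, so that $\ass(A)=\operatorname{Min}(A)$, a finite set. If $\ass(M)\subset\ass(A)$, the inclusion of unions is clear. Conversely, suppose the inclusion of unions holds and take $P\in\ass(M)$. Then $P\subset\bigcup_{\mathfrak q\in\operatorname{Min}(A)}\mathfrak q$, a finite union. Prime avoidance gives $P\subset\mathfrak q$ for some minimal prime $\mathfrak q$, and minimality forces $P=\mathfrak q\in\ass(A)$. Note that $\ass(M)$ may be infinite here; this causes no trouble, because prime avoidance is applied only to the finite union on the $A$ side.

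For the localization statement, I will use that for Noetherian $A$, any module $M$ and any multiplicative set $S$, one has
\[\ass_{S^{-1}A}(S^{-1}M)=\mkset{S^{-1}P}{P\in\ass(M),\ P\cap S=\emptyset}.\]
This holds for arbitrary $M$, because annihilators of elements of a cyclic submodule are controlled by finitely generated primes. Applying it to $A$ gives $\ass(S^{-1}A)=\mkset{S^{-1}\mathfrak q}{\mathfrak q\in\operatorname{Min}(A),\ \mathfrak q\cap S=\emptyset}$, which equals $\operatorname{Min}(S^{-1}A)$. Hence $S^{-1}A$ again has no embedded primes. If $M$ is torsion-free, then $\ass(M)\subset\ass(A)$ by the previous step, so $\ass(S^{-1}M)\subset\ass(S^{-1}A)$. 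Applying the criterion over the Noetherian ring $S^{-1}A$ shows that $S^{-1}M$ is torsion-free.

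The only real obstacle is making sure that the zero-divisor description and the localization of $\ass$ do not require $M$ to be finitely generated. Both rest only on the Noetherian hypothesis applied to cyclic submodules, so I would verify this carefully and treat everything else as formal.
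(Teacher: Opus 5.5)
Your proof is correct. The reduction of torsion-freeness to $Z(M)\subset Z(A)$ is sound, and so is the description $Z(M)=\bigcup_{P\in\operatorname{Ass}(M)}P$ for arbitrary $M$ over a Noetherian ring. The same holds for prime avoidance against the finite set $\operatorname{Min}(A)$, and for the formula $\operatorname{Ass}_{S^{-1}A}(S^{-1}M)=\{S^{-1}P : P\in\operatorname{Ass}(M),\ P\cap S=\emptyset\}$, which indeed only needs each such $P$ to be finitely generated.

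The paper gives no proof of this lemma. It quotes it from Epstein--Yao as a second explanation of Corollary~\ref{cor:ass-min-loctf}, so the useful comparison is with the paper's own route to that corollary, which is ring-theoretic:
\begin{enumerate}
\item For arbitrary rings, $\mathrm{LocTF}$ is equivalent to $\Sigma(A)=\operatorname{KAss}_A(A)$ (Theorem~\ref{thm:krull-characterization}).
\item For Noetherian $A$, $\operatorname{KAss}_A(A)=\operatorname{Ass}_A(A)$, since $A$ is finitely generated over itself.
\item Hence $\mathrm{LocTF}$ holds exactly when $\operatorname{Ass}_A(A)$ is closed under generization (Corollary~\ref{cor:noetherian}).
\end{enumerate}
That approach never uses associated primes of non-finitely generated modules and gives an equivalence, but it only covers localization at primes.

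Your module-by-module argument handles an arbitrary multiplicative set $S$. The same prime-avoidance step also works under the weaker hypothesis of Corollary~\ref{cor:noetherian}. If $\operatorname{Ass}(A)$ is closed under generization and $M$ is torsion-free, each $P\in\operatorname{Ass}(M)$ lies in some $\mathfrak q\in\operatorname{Ass}(A)$, hence $P\in\operatorname{Ass}(A)$.

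One small simplification: you need not check that $S^{-1}A$ has no embedded primes. Over any Noetherian ring, $\operatorname{Ass}(S^{-1}M)\subset\operatorname{Ass}(S^{-1}A)$ already gives the inclusion of unions, and hence torsion-freeness, by your first assertion.
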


\begin{rem}
    Let $(A,\mathfrak m)$ be a Noetherian local ring such that $\mathfrak m\in\operatorname{Ass}_A(A)$, equivalently, $\operatorname{depth} A=0$. In this case, the property $\mathrm{LocTF}$ imposes a strong restriction. Indeed, if $\mathrm{LocTF}$ holds, then every prime ideal $\mathfrak q\subset\mathfrak m$ must be an associated prime. Hence $\operatorname{Spec} A=\operatorname{Ass}_A(A)$ .
\end{rem}

We collect several examples, including some that have already appeared above.

\begin{ex}
    If $A$ is a Noetherian reduced ring, then $\operatorname{Ass}_A(A)=\operatorname{Min}(A)$. Hence $A$ satisfies $\mathrm{LocTF}$. This also follows from the fact that $\dim Q(A)=0$.
\end{ex}

\begin{ex}[\cite{Ando2026}*{Example~4.9}]
    Let $k$ be a field, and let $A=k[x,y]_{(x,y)}/(x^2,xy)$. Then $\operatorname{Spec} A=\{(x),(x,y)\}$, and both prime ideals are associated primes. Therefore $\operatorname{Ass}_A(A)$ is closed under generization, and hence $A$ satisfies $\mathrm{LocTF}$ by Corollary~\ref{cor:noetherian}. In this example, one has $A=Q(A)$ and $\dim Q(A)=1$.
\end{ex}

\begin{ex}
    Let $A$ be a Noetherian ring. Suppose that $P\in\operatorname{Ass}_A(A)$ and that there exists a prime ideal $\mathfrak q\subsetneq P$ which is not an associated prime. Then, by Corollary~\ref{cor:noetherian}, the ring $A$ does not satisfy $\mathrm{LocTF}$.
\end{ex}

In \cite{Ando2026}, the fact that localizations of torsion-free modules over semi-hereditary rings remain torsion-free was used via the known result that semi-hereditary rings satisfy $\dim Q(A)=0$; see \cite{Glaz1989}*{Corollary~4.2.19}. As we have seen above, $\mathrm{LocTF}$ is weaker than being semi-hereditary, and this result can be generalized to PP rings. For the control of zero-divisors by PP rings, see also \cite{Glaz2002}.

\begin{defi}
    A ring $A$ is called a PP ring if, for every $a\in A$, the principal ideal $aA$ is projective. This is equivalent to saying that, for every $a\in A$, there exists an idempotent element $e\in A$ such that $\Ann_A(a)=eA$.
\end{defi}

\begin{prop}\label{prop:pp}
    Every PP ring satisfies $\mathrm{LocTF}$. Consequently, every semi-hereditary ring satisfies $\mathrm{LocTF}$.
\end{prop}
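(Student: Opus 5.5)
We verify condition (3) of Theorem~\ref{thm:main-equivalences}: for every $P\in\spec A$ and every $\alpha\in\Reg(A_P)$, the ideal $\alpha A_P$ meets $\Reg(A)_P$. The PP hypothesis gives, for each $a\in A$, an idempotent $e\in A$ with $\Ann_A(a)=eA$. The key auxiliary fact is that the element $a+e$ is a non-zero-divisor on $A$. Indeed, suppose $(a+e)x=0$ and multiply by $e$. Since $ea=0$, this gives $ex=0$, and then $ax=0$. Hence $x\in\Ann_A(a)=eA$, so $x=ex=0$.

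Now take $\alpha=a/s\in\Reg(A_P)$. Since $s$ is a unit in $A_P$, we may assume $\alpha=a/1$. Choose $e$ with $\Ann_A(a)=eA$. Because $e$ is idempotent, exactly one of $e$ and $1-e$ lies outside $P$; in $A_P$, one of $e/1,(1-e)/1$ is therefore a unit and the other is $0$.

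We claim that $e/1=0$ in $A_P$. If not, then $e/1$ is a nonzero element of $A_P$ with $(a/1)(e/1)=ae/1=0$, contradicting $a/1\in\Reg(A_P)$. Hence $e/1=0$ and $(1-e)/1$ is a unit. It follows that $(a+e)/1=a/1$ in $A_P$. Since $a+e\in\Reg(A)$, we get $a/1\in\alpha A_P\cap\Reg(A)_P$, which is condition (3). Theorem~\ref{thm:main-equivalences} then gives $\mathrm{LocTF}$.

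For the consequence, a semi-hereditary ring is PP, since principal ideals are finitely generated and hence projective. The equivalence between projectivity of $aA$ and $\Ann_A(a)$ being generated by an idempotent is taken from the definition. The only delicate step is checking that $a+e$ is regular, which the computation above settles, so I expect no real obstacle.
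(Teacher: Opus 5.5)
Your proposal is correct and follows essentially the same route as the paper: you verify condition (3) of Theorem~\ref{thm:main-equivalences}. You show that $a+e$ is regular and that $e/1=0$ in $A_P$, so $(a+e)/1$ agrees with $a/1$ in $A_P$. The only difference is cosmetic: the paper obtains $e/1=0$ by localizing the annihilator, $\Ann_{A_P}(a/s)=eA_P$, whereas you argue directly that a nonzero $e/1$ would be killed by $a/1$.
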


\begin{proof}
    Let $A$ be a PP ring, and let $P\in\operatorname{Spec} A$. Suppose that $a/s\in A_P$ is a non-zero-divisor. Choose an idempotent element $e\in A$ such that $\operatorname{Ann}_A(a)=eA$. After localization, we have $\operatorname{Ann}_{A_P}(a/s)=eA_P$.
    Since $a/s$ is a non-zero-divisor on $A_P$, it follows that $e/1=0$ in $A_P$.

    Set $b=a+e$. We claim that $b$ is a non-zero-divisor on $A$. Indeed, suppose that $(a+e)c=0$. Multiplying by $e$, and using $ae=0$, we get $ec=0$. Then $(a+e)c=0$ and $ec=0$ imply $ac=0$, so $c\in\operatorname{Ann}_A(a)=eA$. Hence $c=ec=0$. Thus $b\in\operatorname{Reg}(A)$.

    On the other hand, since $e/1=0$ in $A_P$, we have $b/s=a/s$ in $A_P$. Therefore
    \[
        a/s\in (a/s)A_P\cap \operatorname{Reg}(A)_P.
    \]
    Thus condition (3) of Theorem~\ref{thm:main-equivalences} holds. Hence $A$ satisfies $\mathrm{LocTF}$.

    Finally, every semi-hereditary ring is a PP ring, since every finitely generated ideal is projective, and in particular every principal ideal is projective. Therefore every semi-hereditary ring satisfies $\mathrm{LocTF}$.
\end{proof}

\begin{ex}
    Let $(R,\mathfrak m,k)$ be a one-dimensional local domain, and set $A=R\ast k$. Then $A$ satisfies $\mathrm{LocTF}$. Indeed, we have
    \[
        \operatorname{Spec} A=\{(0)\ast k,\ \mathfrak m\ast k\}.
    \]
    Moreover, $\mathfrak m\ast k=\operatorname{Ann}_A((0,1))$, and for every $0\neq a\in\mathfrak m$, one has $(0)\ast k=\operatorname{Ann}_A((a,0))$. Hence $\Sigma(A)=\operatorname{KAss}_A(A)$, and therefore $\mathrm{LocTF}$ holds.
\end{ex}

\begin{rem}
    In the notation of Example~\ref{ex:hmcm-ring-localizes-to-bad-idealization}, one has $\Reg(A)=A^\times$. Hence every $A$-module, and in particular $A/I$, is torsion-free over $A$. On the other hand,
    \[
        A_{\mathfrak p}\cong D_P, 
        (A/I)_{\mathfrak p}\cong D_P/sD_P,
    \]
    and the latter module is not torsion-free over $A_{\mathfrak p}$, since $s$ is regular on $D_P$ but acts as zero on $D_P/sD_P$.
    
    Equivalently, since $Q(A)=A$, the natural map
    \[
        Q(A)_{\mathfrak p}\cong D_P
        \rightarrow
        Q(A_{\mathfrak p})\cong \operatorname{Frac}(D_P)
    \]
    is not an isomorphism. Thus, by Theorem~\ref{thm:main-equivalences}, the ring $A$ does not satisfy $\mathrm{LocTF}$. 
    
    The construction in Example \ref{ex:hmcm-ring-localizes-to-bad-idealization} exhibits a failure of LocTF: after localization, a new regular element appears and acts as zero on the localized module component. This suggests that the localization behavior of torsion-free modules may be relevant to the localization problem for HMCM rings. 
\end{rem}

\subsection*{Acknowledgments}

The characterization of $\mathrm{LocTF}$ in Section~\ref{sec:loctf} is based on an idea from private communication with Ryota Kuroki concerning supplementary information to \cite{Ando2026}. The author is deeply grateful to him.

\end{document}